\definecolor{myorange}{rgb}{1,0.647,0}
\definecolor{mypurple}{cmyk}{.6,.9,0, .11}
\numberwithin{equation}{section}
\newcommand{\arxiv}[1]{\href{https://arxiv.org/abs/#1}{\small  arXiv:#1}}
\theoremstyle{definition}
\newtheorem{thm}{Theorem}[section]
\newtheorem{cor}[thm]{Corollary}
\newtheorem{lem}[thm]{Lemma}
\newtheorem{rem}[thm]{Remark}
\newtheorem{prop}[thm]{Proposition}
\newtheorem{defn}[thm]{Definition}
\newtheorem{example}[thm]{Example}
\newtheorem*{ack}{Acknowledgements}
\tikzset{anchorbase/.style={baseline={([yshift=-0.5ex]current bounding box.center)}},
tinynodes/.style={font=\tiny,text height=0.75ex,text depth=0.15ex},
smallnodes/.style={font=\scriptsize,text height=0.75ex,text depth=0.15ex},
>={Latex[length=1mm, width=1.5mm]},
overcross/.style={line width=4pt,color=white},
}
\tikzstyle directed=[postaction={decorate,decoration={markings,
    mark=at position #1 with {\arrow{>}}}}]
\tikzstyle rdirected=[postaction={decorate,decoration={markings,
    mark=at position #1 with {\arrow{<}}}}]
\newcommand{\Hom}{{\rm Hom}}
\newcommand{\End}{{\rm End}}
\renewcommand{\to}{\rightarrow}
\def\C{{\mathbb C}}
\def\R{{\mathbb R}}
\newcommand{\gln}[1][n]{\mathfrak{gl}_{#1}}
\newcommand{\spn}[1][2n]{\mathfrak{sp}_{#1}}
\newcommand{\Web}{\mathbf{Web}}
\newcommand{\CS}{\mathcal{C}}
\newcommand{\SG}{\mathfrak{S}}
\newcommand{\ic}{\mathsf{ic}}
\newcommand{\oc}{\mathsf{oc}}
\newcommand{\UD}{\mathrm{OT}}
\DeclareMathOperator{\RS}{RS}
\DeclareMathOperator{\BSW}{BSW}
\DeclareMathOperator{\SW}{SW}
\DeclareMathOperator{\Brauer}{Brauer}
\begin{document}
%

\title[]{Decreasing subsequences and Viennot for oscillating tableaux}

\author{Elijah Bodish}
\address{Department of Mathematics, Indiana University Bloomington, 
Rawles Hall, Bloomington, IN 47405-7106, USA}
\email{ebodish@iu.edu}

\author{Ben Elias}
\address{Department of Mathematics, University of Oregon,
Fenton Hall, Eugene, OR 97403-1222, USA}
\email{belias@uoregon.edu}

\author{David E. V. Rose}
\address{Department of Mathematics, University of North Carolina, 
Phillips Hall CB \#3250, UNC-CH, Chapel Hill, NC 27599-3250, USA}
\email{davidrose@unc.edu, ltatham@live.unc.edu}

\author{Logan Tatham}

\begin{abstract}
We establish an extension of Viennot's geometric (shadow line) construction
to the setting of oscillating tableaux.
We then use this to give a new proof of the Type $C$ analogue
of Schensted's theorem on longest decreasing subsequences.
This pairs with our results from \cite{BERT} on Type $C$ webs to give a
direct proof of a result of Sundaram and Stanley: 
that the dimension of the space of invariant vectors in a 
$2k$-fold tensor product of the vector representation 
of $\spn$ equals the number of $(n+1)$-avoiding matchings of $2k$ points.
\end{abstract}

\maketitle

%
\section{Introduction}
%

The Robinson--Schensted correspondence is a celebrated combinatorial bijection 
between permutations and pairs of standard Young tableaux of the same shape: 
\[ \SG_k
\stackrel{\RS}{\longleftrightarrow} 
\{(P,Q,\lambda) \mid \lambda \vdash k \, , \ P, Q \text{ are standard Young tableaux of shape } \lambda \} \, . 
\] 
Mechanically, one takes a permutation $w \in \SG_k$ written in one-line notation $w(1) w(2) \cdots w(k)$ 
and uses Schensted's \emph{bumping algorithm} 
to add one number $w(t)$ at a time to the \emph{insertion tableau} $P$, 
at each step giving a tableau $P_t$ (of non-standard content) whose shape is a partition $\lambda_t \vdash t$. 
This sequence of partitions is encoded in the \emph{recording tableau} $Q$. 
In other words, the bumping algorithm is a rule to produce from $w$ a sequence of steadily growing tableaux 
$(P_t, \lambda_t)$ for $1 \le t \le k$.

Many useful properties of the permutation $w \in \SG_k$ are encoded within the corresponding triple 
$\RS(w) = (P,Q,\lambda)$, but many other features are obfuscated. 
While one can use Schensted's reverse bumping algorithm to determine $w$ from $\RS(w)$, 
it is not easy to gain intuition for how $w$ behaves simply by looking at $P$ and $Q$.

An example where a feature of a permutation is both well-encoded and obscured by $\RS$ 
is the study of longest increasing (resp. longest decreasing) subsequences. 
An \emph{increasing subsequence} is a sequence $1 \le t_1 < t_2 < \cdots < t_\ell \le k$
such that $w(t_1) < w(t_2) < \cdots < w(t_\ell)$; the length of the subsequence is $\ell$. 
Decreasing subsequences are defined analogously. The following is a classic result due to Schensted.

\begin{thm}[{\cite{Schensted}}]\label{thm:longest}
Let $w \in \SG_k$. The length of the longest increasing (resp. decreasing) subsequence of $w$ is equal to 
the number of columns\footnote{Here we use synecdoche: if $\RS(w) = (P,Q,\lambda)$ then 
by ``the number of columns in $\RS(w)$'' we mean the number of columns in $\lambda$. 
We continue to use this convention throughout the paper.} (resp. rows) in $\RS(w)$.
\end{thm}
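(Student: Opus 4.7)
The plan is to prove the statement about longest increasing subsequences and row lengths (equivalently, the first-row length of $\lambda$); the statement about longest decreasing subsequences and column lengths then follows by applying the first statement to the reversed word $w^R$ given by $w^R(t) = w(k+1-t)$, whose longest increasing subsequences are precisely the longest decreasing subsequences of $w$ and whose $\RS$ shape is the transpose of $\lambda$. The central tool I would use is the following stronger invariant, to be established by induction on $t$ for $0 \le t \le k$:
\begin{quote}
\emph{For each $j \ge 1$, the entry in position $(1,j)$ of $P_t$ is the minimum value $v$ for which there exists an increasing subsequence of $w(1), w(2), \ldots, w(t)$ of length $j$ ending at $v$.}
\end{quote}
Taking $t = k$ in this invariant immediately gives the theorem, since the length of the first row of $P_k$ is then the largest $j$ for which such a subsequence exists.

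The inductive step splits according to whether inserting $w(t+1)$ appends a new box at the end of row $1$ or bumps the leftmost entry of row $1$ exceeding $w(t+1)$. In the appending case, if $\ell$ is the length of row $1$ in $P_t$, the stronger invariant provides an increasing subsequence of length $\ell$ in $w(1), \ldots, w(t)$ ending at $P_t(1,\ell) < w(t+1)$, which extends to length $\ell+1$ terminating at $w(t+1)$; minimality is forced since $w(t+1)$ is the only fresh candidate terminal. In the bumping case, where $w(t+1)$ replaces $P_t(1,j)$, the inductive invariant at $(1,j-1)$ produces a length-$j$ subsequence ending at $w(t+1)$, and minimality at $(1,j)$ follows because any length-$j$ subsequence in $w(1), \ldots, w(t+1)$ either ends at $w(t+1)$ or lives in $w(1), \ldots, w(t)$, and in the latter case the old invariant at $(1,j)$ controls the terminal value.

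The main obstacle is checking the invariant at positions $(1,i)$ for $i > j$ in the bumping case, where the entry is unchanged from $P_t$: one must argue that the presence of $w(t+1)$ does not permit a smaller terminal value for length-$i$ subsequences. This amounts to observing that $w(t+1) < P_t(1,j) \le P_t(1,i)$ combined with the inductive invariant at $(1,j)$ rules out any improvement, since a length-$i$ subsequence terminating at $w(t+1)$ would require a length-$(i-1)$ subsequence ending below $w(t+1)$, and appending $P_t(1,i)$-type data cannot beat the old minimum. In the spirit of the present paper, one could bypass this case analysis altogether by passing through Viennot's shadow-line construction: plotting the points $(t, w(t))$ and reading off the first shadow line makes it geometrically transparent that its length simultaneously records the first row of $P$ and the longest increasing subsequence of $w$.
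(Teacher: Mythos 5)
Your argument for the increasing half is correct: the invariant that $P_t(1,j)$ is the smallest possible final entry of a length-$j$ increasing subsequence of $w(1),\ldots,w(t)$ is exactly Schensted's original direct argument, and your case analysis goes through. (One small repair: for positions $(1,i)$ with $i>j$ in the bumping case, the clean statement is that since $i-1\ge j$ and the first row is increasing, $P_t(1,i-1)\ge P_t(1,j)>w(t+1)$, so by the invariant at $(1,i-1)$ no length-$(i-1)$ subsequence of $w(1),\ldots,w(t)$ ends below $w(t+1)$, hence no length-$i$ subsequence can terminate at $w(t+1)$; your appeal to ``the invariant at $(1,j)$'' and to ``appending $P_t(1,i)$-type data'' is muddled but repairable.) Note that this is a genuinely different route from the paper, which proves the increasing case geometrically in Proposition \ref{prop:increasing} by counting $c_1$-colored lattice paths in the Viennot diagram; your bumping-invariant argument is more elementary but does not generalize to the up-down setting of \S\ref{sec:updown}, which is the paper's real target.

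The genuine gap is in the decreasing half. You reduce it to the increasing half applied to the reversed word $w^R$ via the assertion that the $\RS$ shape of $w^R$ is the transpose $\lambda^T$. That assertion is true, but it is itself a nontrivial theorem of Schensted --- essentially where all the difficulty of the decreasing case lives --- and you offer no argument for it. Nothing in your first-row invariant delivers it: the invariant controls only the first row of the insertion tableau, and the naive dual statement (that $P_t(i,1)$ is the minimal terminal value of a length-$i$ \emph{decreasing} subsequence) is simply false, as one checks already for $w=2431$, where $P_4(2,1)=2$ but the decreasing subsequence $2,1$ ends at $1$. So there is no cheap mirror-image induction, and the transpose lemma must be proved separately (Schensted's original induction, Greene's theorem, or a column-insertion argument). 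This is precisely what the paper means when it calls Schensted's treatment of the decreasing case ``indirect''; the paper's own proof, Corollary \ref{cor:decreasing}, avoids the transpose lemma entirely by pairing Theorem \ref{thm:V} with Proposition \ref{prop:corners}, which shows directly on the Viennot diagram that the color $c_\ell$ appears if and only if there is a decreasing subsequence of length $\ell$. As written, your proposal establishes the increasing case but leaves the decreasing case resting on an unproved lemma.
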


Despite knowing their length, it is not obvious how to read the longest increasing and decreasing 
subsequences from the triple $(P,Q,\lambda)$.

\begin{example}\label{ex:PQ1}
Consider the permutation $w \in \SG_{19}$ written in one-line notation as
\[
w=2 \ 9 \ 1 \ 15 \ 4 \ 7 \ 13 \ 18 \ 11 \ 19 \ 5 \ 14 \ 3 \ 10 \ 6 \ 17 \ 8 \ 16 \ 12 \, .
\]
We have that
\[
P=
{\footnotesize
\begin{ytableau}
1 & 3 & 5 & 6 & 8 & 12 \cr
2 & 4 & 10 & 14 & 16 \cr
7 & 11 & 17 & 19 \cr
9 & 13 & 18 \cr
15 \cr
\end{ytableau}
}
\, , \quad
Q=
{\footnotesize
\begin{ytableau}
1 & 2 & 4 & 7 & 8 & 10 \cr
3 & 5 & 6 & 12 & 16 \cr
9 & 14 & 17 & 18 \cr
11 & 15 & 19 \cr
13 \cr
\end{ytableau}
}
\]
Correspondingly, the longest increasing subsequence has length $6$, and one such subsequence 
is (indicated in \textbf{bold}):
\[
w= \mathbf{2} \ 9 \ 1 \ 15 \ \mathbf{4} \ 7 \ 13 \ 18 \ 11 \ 19 \ \mathbf{5} \ 14 \ 3 \ 
	10 \ \mathbf{6} \ 17 \ \mathbf{8} \ 16 \ \mathbf{12}
\]
(there are many others). It is not clear how this subsequence is encoded in $P$ and $Q$.
On the other hand, 
the longest decreasing subsequence has length $5$, and one such subsequence is
\[
w=2 \ 9 \ 1 \ \mathbf{15} \ 4 \ 7 \ \mathbf{13} \ 18 \ \mathbf{11} \ 19 \ 5 \ 14 \ 3 \ 
	\textbf{10} \ 6 \ 17 \ \textbf{8} \ 16 \ 12 \, .
\]
In this case, it is tempting to believe that we obtain a longest decreasing subsequence 
by letting $w(t_1)$ be the entry in the last row of $P$ and then successively taking the largest 
entry that is smaller in the previous row. However, this is \textbf{false}, as our next example shows.
\end{example}

\begin{example}\label{ex:PQ2}
Consider the permutation $w \in \SG_{19}$ written in one-line notation as
\[
w = 19 \ 10 \ 1 \ 8 \ 18 \ 12 \ 13 \ 11 \ 16 \ 14 \ 3 \ 9 \ 7 \ 4 \ 6 \ 2 \ 15 \ 17 \ 5 \, .
\]
We have that
\[
P=
{\footnotesize
\begin{ytableau}
1 & 2 & 4 & 5 & 14 & 15 & 17 \cr
3 & 6 & 13 \cr
7 & 9 & 16 \cr
8 & 11 \cr
10 \cr
12 \cr
18 \cr
19 \cr
\end{ytableau}
}
\, , \quad
Q=
{\footnotesize
\begin{ytableau}
1 & 4 & 5 & 7 & 9 & 17 & 18 \cr
2 & 6 & 10 \cr
3 & 12 & 15 \cr
8 & 19 \cr
11 \cr
13 \cr
14 \cr
16 \cr
\end{ytableau}
}
\]
The longest increasing subsequence has length $7$, 
and one such subsequence is 
\[
w = 19 \ 10 \ \mathbf{1} \ \mathbf{8} \ 18 \ \mathbf{12} \ \mathbf{13} \ 11 \ 16 \ \mathbf{14} \ 
	3 \ 9 \ 7 \ 4 \ 6 \ 2 \ \mathbf{15} \ \mathbf{17} \ 5 \, .
\]
The longest decreasing subsequence has length $8$, 
and one such subsequence is 
\[
w = \mathbf{19} \ 10 \ 1 \ 8 \ \mathbf{18} \ 12 \ 13 \ 11 \ \mathbf{16} \ \mathbf{14} \ 
	3 \ \mathbf{9} \ \mathbf{7} \ \mathbf{4} \ 6 \ \mathbf{2} \ 15 \ 17 \ 5 \, .
\]
In both cases, 
it is not clear how to read these subsequences off from $P$ or $Q$. 
In particular, the schema for finding a longest decreasing subsequence in Example \ref{ex:PQ1} 
\textbf{fails} to produce such a sequence here.
Indeed, no longest decreasing sequence contains the number $10$, the lone entry in its row in $P$.
\end{example}

\begin{rem}[Relation to $\gln$ representation theory]
Theorem \ref{thm:longest} links longest decreasing sequences to the representation theory of the Lie algebra $\gln$. 
Consider the vector representation $V = \C^n$ of $\gln$. 
Schur--Weyl duality between $\SG_k$ and $\gln$ shows that there is a surjective $\C$-algebra homomorphism
\[
\SW \colon 
\C[\SG_k] \to \End_{\gln}\big( V^{\otimes k} \big) \, .
\]
This homomorphism is injective if and only if $k \le n$. 
The group algebra $\C[\SG_k]$ has a basis $\{b_w\}$ indexed by permutations $w \in \SG_k$
(the $q=1$ specialization of the Kazhdan--Lusztig basis) such that $\ker(\SW)$ is 
spanned\footnote{We are unaware of the best reference for this well-known fact. 
As a representation of $\C[\SG_k]$, $V^{\otimes k}$ is a sum of Specht modules 
for partitions $\lambda$ with at most $n$ rows. 
In \cite[Theorem 6.5.3]{BjBr}, one can find a proof of the theorem 
(claimed earlier by Kazhdan and Lusztig) 
that the cell modules for the Hecke algebra agree with the Specht modules with the expected labeling. 
Thus, Kazhdan-Lusztig basis elements in lower cells than $\lambda$ 
will kill the Specht module associated to $\lambda$.} 
by 
\[
\{b_w \mid \RS(w) \text{ has at least $n+1$ rows} \} \, . 
\]
Consequently, 
the dimension of the space of $\gln$-invariant vectors in $V^{\otimes k} \otimes \big(V^\vee)^{\otimes k}$
is equal to
the number of permutations $w \in \SG_k$ such that $\RS(w)$ has $n$ or fewer rows. 
By Theorem \ref{thm:longest}, this is the same as the number of permutations 
that avoid a length $n+1$ decreasing sequence.

\noindent \textbf{Note:} A reader lacking background on (or disinterested in) such 
aspects of Lie algebra representation theory can safely read on. 
We have relegated the representation-theoretic implications of the combinatorics discussed in this paper to 
this Remark and Remark \ref{rem:spn}.
\end{rem}

In the present paper, motivated by representation theory, 
we are particularly interested in the case of longest \emph{decreasing} subsequences. 
While Schensted's proof of Theorem \ref{thm:longest} in the case of increasing subsequences is straightforward, 
his arguments for decreasing subsequences are indirect.
Work of Viennot \cite{Viennot} remedies this, providing
a direct and easy proof of Theorem \ref{thm:longest} for decreasing subsequences. 
In fact, our main interest in this paper is not the Robinson--Schensted correspondence, 
but rather a type $C$ generalization thereof that is
related to the representation theory of the Lie algebra $\spn$. 
As we show in \S \ref{sec:updown}, 
Viennot's proof of Theorem \ref{thm:longest} extends to type $C$.
However, we first finish discussing the story in type $A$.

Viennot's work proceeds via a graphical interpretation of the Robinson--Schensted correspondence 
in terms of certain colored segments in the first quadrant of $\R^2$ that we call \emph{Viennot diagrams}.
We recall this construction and give several examples in \S \ref{sec:Viennot}.
Viennot's methods are most well-known because they give an easy proof of the following property 
(which is hard to prove using the bumping algorithm): 
if $\RS(w) = (P,Q,\lambda)$, then $\RS(w^{-1}) = (Q,P,\lambda)$. 
Viennot's simple proof of this fact follows by reflecting a Viennot diagram 
along the diagonal $y = x$ inside $\R^2$, a symmetry which swaps $w$ and $w^{-1}$, as well as $P$ and $Q$. 
However, Viennot also provides a straightforward proof of Theorem \ref{thm:longest}.
To keep the discussion self-contained 
(and since the original French text \cite{Viennot} is sufficiently 
difficult\footnote{Indeed, difficult enough that the authors were able to independently 
prove the results in \S \ref{sec:Viennot} on decreasing/increasing subsequences 
(and our generalization to oscillating tableaux in \S \ref{sec:updown}) 
before realizing we had been scooped (on the former) by none other than Viennot himself!}
to track down), 
we recall Viennot's argument for longest
decreasing subsequences in Proposition \ref{prop:corners} and Corollary \ref{cor:decreasing}
and for longest increasing subsequences in Proposition \ref{prop:increasing}.
This provides an intuitive algorithm to find longest decreasing/increasing 
subsequences; see the discussion around Lemma \ref{lem:algorithm}. 
Another useful feature of Viennot's construction is that it records a ``timeline'' of the bumping algorithm; 
see Definition \ref{def:Viennot} and Example \ref{ex:timeline}. 
This will be a crucial feature in our extension of this construction to type $C$ combinatorics.

\subsection{The Sundaram--Stanley bijection} \label{sec:introtypeC}

The type $C$ analogue of the Robinson--Schensted correspondence 
(dealing with the combinatorics arising in Brauer--Schur--Weyl duality)
is due to Sundaram \cite{SundaramThesis}, with a variant given by Stanley \cite{Stanley}. 
In this correspondence, permutations in $\SG_k$ are generalized by matchings of $2k$ points, 
and pairs of standard Young tableaux are replaced by so-called \emph{oscillating tableau}. 
We now describe this bijection, which is easy to motivate using \emph{string diagrams} for permutations in $\SG_k$.

Suppose we are given a string diagram $\mathcal{D}$ for a permutation $w(\mathcal{D}) \in \SG_k$, e.g.
\begin{equation}\label{eq:SD}
\mathcal{D} = 
\begin{tikzpicture}[anchorbase, scale=.5,smallnodes]
\draw[very thick] (1,0) to [out=90,in=270] (2,3) node[above=-2pt]{$2$};
\draw[very thick] (2,0) to [out=90,in=270] (4,3) node[above=-2pt]{$4$};
\draw[very thick] (3,0)  to [out=90,in=270] (4,1.5) to [out=90,in=270] (3,3) node[above=-2pt]{$3$};
\draw[very thick] (4,0) to [out=90,in=270] (1,3) node[above=-2pt]{$1$};
\end{tikzpicture}
\implies
w(\mathcal{D}) = 2 \ 4 \ 3 \ 1 \, .
\end{equation}
We read string diagrams from bottom-to-top, 
thus the one-line notation for $w(\mathcal{D})$ can be obtained by labeling the points at the 
top of the strands $1,\ldots,k$ from left-to-right as in \eqref{eq:SD}, and ``sliding'' the numbers  
along the strands to the bottom.
By ``bending'' the top points around to the right, this permutation determines a matching of $2k$ fixed points on a line 
wherein none of first (or last) $k$ points are matched with each other, e.g.
\[
\begin{tikzpicture}[anchorbase, scale=.5,smallnodes]
\draw[very thick] (1,0) node[below]{$2$} to [out=90,in=270] (2,3) 
	to [out=90,in=180] (4.5,5.5) to [out=0,in=90] (7,3) to (7,0) node[below]{$\bar{2}$};
\draw[very thick] (2,0) node[below]{$4$} to [out=90,in=270] (4,3) 
	to [out=90,in=180] (4.5, 3.5) to [out=0,in=90] (5,3) to (5,0) node[below]{$\bar{4}$};
\draw[very thick] (3,0) node[below]{$3$} to [out=90,in=270] (4,1.5) to [out=90,in=270] (3,3) 
	to [out=90,in=180] (4.5,4.5) to [out=0,in=90] (6,3) to (6,0) node[below]{$\bar{3}$};
\draw[very thick] (4,0) node[below]{$1$} to [out=90,in=270] (1,3) 
	to [out=90,in=180] (4.5,6.5) to [out=0,in=90] (8,3) to (8,0) node[below]{$\bar{1}$};
\end{tikzpicture}
\sim
\begin{tikzpicture}[anchorbase, scale=.5,smallnodes]
\draw[very thick] (1,0) node[below]{$2$} to [out=90,in=180] (4,4) to [out=0,in=90] (7,0) node[below]{$\bar{2}$};
\draw[very thick] (2,0) node[below]{$4$} to [out=90,in=180] (3.5,3) to [out=0,in=90]  (5,0) node[below]{$\bar{4}$};
\draw[very thick] (3,0) node[below]{$3$} to [out=90,in=180] (4.5,1.5) to [out=0,in=90] (6,0) node[below]{$\bar{3}$};
\draw[very thick] (4,0) node[below]{$1$} to [out=90,in=180] (6,3) to [out=0,in=90](8,0) node[below]{$\bar{1}$};
\end{tikzpicture}
\]
Here, we label the right endpoints of the strands in the matching by the labels $\bar{1},\ldots,\bar{k}$ 
from \emph{right-to-left}, 
and label the left endpoints with the corresponding un-barred symbol. 
The one-line notation for this matching is therefore 
\[ 
2\ 4\ 3\ 1\ \bar{4} \ \bar{3} \ \bar{2} \ \bar{1} \, . 
\]
It is clearly determined by the one-line notation $2431$ for $w$, 
since the last $k$ symbols are simply $\bar{k}, \ldots, \bar{1}$.

Analogously, given an arbitrary matching\footnote{We will always assume that the corresponding 
string diagram is \emph{reduced}, meaning that pairs of strands cross each other the 
minimal possible number of times.} 
of $2k$ points on a line, 
we can label the right endpoints of the strands $\bar{k}, \ldots, \bar{1}$ 
and label the left endpoints accordingly.
This associates to each such matching $\mathcal{M}$ 
a word $w(\mathcal{M})$ using each of the symbols $\{1,\ldots,k,\bar{1},\ldots,\bar{k}\}$ once. 
In this word, the symbol $i$ must appear before the symbol $\bar{i}$, and the symbols 
$\bar{k},\ldots,\bar{1}$ appear in decreasing order. For example:
\begin{equation}\label{eq:matchingex}
\mathcal{M} = 
\begin{tikzpicture}[anchorbase, scale=.5,smallnodes]
\draw[very thick] (1,0) node[below]{$7$} to [out=90,in=180] (4,3) to [out=0,in=90] (7,0) node[below]{$\bar{7}$};
\draw[very thick] (2,0) node[below]{$8$} to [out=90,in=180] (3.5,1.5) to [out=0,in=90]  (5,0) node[below]{$\bar{8}$};
\draw[very thick] (3,0) node[below]{$6$} to [out=90,in=180] (6.5,3.5) to [out=0,in=90] (10,0) node[below]{$\bar{6}$};
\draw[very thick] (4,0) node[below]{$5$} to [out=90,in=180] (8,4) to [out=0,in=90] (12,0) node[below]{$\bar{5}$};
\draw[very thick] (6,0) node[below]{$3$} to [out=90,in=180] (10,4) to [out=0,in=90] (14,0) node[below]{$\bar{3}$};
\draw[very thick] (8,0) node[below]{$4$} to [out=90,in=180] (10.5,2.5) to [out=0,in=90] (13,0) node[below]{$\bar{4}$};
\draw[very thick] (9,0) node[below]{$1$} to [out=90,in=180] (12.5,3.5) to [out=0,in=90] (16,0) node[below]{$\bar{1}$};
\draw[very thick] (11,0) node[below]{$2$} to [out=90,in=180] (13,2) to [out=0,in=90] (15,0) node[below]{$\bar{2}$};
\end{tikzpicture}
\implies
w(\mathcal{M}) = 7 \ 8 \ 6 \ 5 \ \bar{8} \ 3 \ \bar{7} \ 4 \ 1 \ \bar{6} \ 2 \ \bar{5} \ \bar{4} \ \bar{3} \ \bar{2} \ \bar{1} \, .
\end{equation}
We will refer to such words $w(\mathcal{M})$ as \emph{matching words} of length $2k$.
Clearly, there is a bijection between matchings and matching words, 
since given a matching word we can match the entries $i$ and $\bar{i}$.

The Sundaram--Stanley bijection associates
an \emph{oscillating tableau} $\UD(\mathcal{M})$ 
with $2k$ steps to each matching word $w(\mathcal{M})$ of length $2k$, 
and thus to each matching $\mathcal{M}$ of $2k$ points. 
Recall that a standard Young tableau (SYT) $Q$ with $k$ boxes can be thought of as a ``movie'' 
$(\lambda_t)_{1 \le t \le k}$ of partitions, where one box 
(the $t$-labeled box in $Q$) is added in each time interval. 
In contrast, an oscillating tableau $\UD$ with $2k$ steps is a sequence 
$(\UD_t)_{t=0}^{2k}$ of partitions wherein the Young diagram for
$\UD_{t+1}$ is obtained from that of $\UD_{t}$ by adding \textbf{or removing} one box. 
In the present paper, all oscillating tableaux will be assumed to satisfy 
$\UD_0 = \varnothing = \UD_{2k}$. 
While the movie $(\lambda_t)$ corresponding to an SYT can be recorded compactly by 
decorating the final time slice $\lambda = \lambda_k$ with numbers (producing $Q$), 
an oscillating tableaux can not be so compactly encoded (after all, the final partition $\UD_{2k}$ is empty). 
Thus, the notation for oscillating tableaux is more cumbersome 
(one must remember the whole movie rather than one time slice), 
even though the concept is not much more difficult.

The oscillating tableau $\UD(\mathcal{M})$ is defined by applying Schensted's bumping algorithm to 
the word $w(\mathcal{M})$ to add a box for each entry of the form $i$, and by simply removing 
the $i$-labeled box for each entry $\bar{i}$.
For example, for the word 
\[ 
w(\mathcal{M}) = 7 \ 8 \ 6 \ 5 \ \bar{8} \ 3 \ \bar{7} \ 4 \ 1 \ \bar{6} \ 2 \ \bar{5} \ \bar{4} \ \bar{3} \ \bar{2} \ \bar{1}
\]
from above, we have
\begin{equation}\label{eq:UDex}
\begin{aligned}
\varnothing \ , \ \
&
\begin{ytableau}
7 \cr
\end{ytableau} \ , \ \ 
\begin{ytableau}
7 & 8 \cr
\end{ytableau} \ , \ \
\begin{ytableau}
6 & 8 \cr
7 \cr
\end{ytableau} \ , \ \
\begin{ytableau}
5 & 8 \cr
6 \cr
7 \cr
\end{ytableau} \ , \ \
\begin{ytableau}
5 \cr
6 \cr
7 \cr
\end{ytableau} \ , \ \
\begin{ytableau}
3 \cr
5 \cr
6 \cr
7 \cr
\end{ytableau} \ , \ \
\begin{ytableau}
3 \cr
5 \cr
6 \cr
\end{ytableau} \ , \ \
\begin{ytableau}
3 & 4 \cr
5 \cr
6 \cr
\end{ytableau}
 \ , \ \ \\ \\
&
\begin{ytableau}
1 & 4 \cr
3 \cr
5 \cr
6 \cr
\end{ytableau} \ , \ \
\begin{ytableau}
1 & 4 \cr
3 \cr
5 \cr
\end{ytableau} \ , \ \
\begin{ytableau}
1 & 2 \cr
3 & 4 \cr
5 \cr
\end{ytableau} \ , \ \
\begin{ytableau}
1 & 2 \cr
3 & 4 \cr
\end{ytableau} \ , \ \
\begin{ytableau}
1 & 2 \cr
3 \cr
\end{ytableau} \ , \ \
\begin{ytableau}
1 & 2 \cr
\end{ytableau} \ , \ \
\begin{ytableau}
1 \cr
\end{ytableau} \ , \ \
\varnothing
\end{aligned}
\end{equation}
We emphasize that the numbers appearing in the Young diagrams in \eqref{eq:UDex} 
are shown here only to exhibit 
how to obtain $\UD(\mathcal{M})$ from $w(\mathcal{M})$,
and \emph{are not} part of the data of an oscillating tableau. The oscillating tableau simply consists of the 
sequence of Young diagrams. 
One can reconstruct the numbering\footnote{The first box removed is $k$, 
the next box removed is $k-1$, and so on. 
Reading the movie backwards, one can use the reverse bumping rule to determine 
how to update the tableau when a box is added.}, 
and hence the matching $\mathcal{M}$,
from the oscillating tableau $\UD(\mathcal{M})$, 
and we often keep the numbers for convenience.
More generally, the theorem of Sundaram--Stanley is as follows.

\begin{thm}\label{thm:SS}
The assignment $\mathcal{M} \mapsto \UD(\mathcal{M})$ determines a bijection 
between matchings of $2k$ points and oscillating tableaux with $2k$ steps.
\end{thm}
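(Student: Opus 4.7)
The plan is to construct an explicit inverse to the map $\mathcal{M} \mapsto \UD(\mathcal{M})$ by processing the up-down tableau from right to left, inverting insertion steps via Schensted's \emph{reverse bumping algorithm}. Given an up-down tableau $\UD = (\UD_t)_{t=0}^{2k}$, I would produce a matching word $w = w_1 \cdots w_{2k}$ together with a sequence of auxiliary labeled tableaux $T_{2k}, T_{2k-1}, \ldots, T_0$, with $T_t$ of shape $\UD_t$. Initialize $T_{2k} = \varnothing$ and a counter $j = 0$, then process $t = 2k, 2k-1, \ldots, 1$ in turn. If $\UD_{t-1}$ has one more box than $\UD_t$ (so the forward step $t$ was a removal), increment $j$, form $T_{t-1}$ from $T_t$ by adding the box $\UD_{t-1} \setminus \UD_t$ with label $j$, and set $w_t = \bar{j}$. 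If instead $\UD_t$ has one more box than $\UD_{t-1}$ (so the forward step was an insertion), apply reverse Schensted bumping to $T_t$ at the removable corner $\UD_t \setminus \UD_{t-1}$ to produce $T_{t-1}$ together with a popped label $i$, and set $w_t = i$.

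The first thing to verify is that this procedure is well-defined and maintains valid standard Young tableaux throughout. For an ``add'' step, the counter $j$ is strictly larger than every label currently present (since the counter is monotonically increasing and smaller values were added earlier), so placing $j$ at a removable corner of $\UD_{t-1}$ preserves the tableau inequalities; reverse bumping is classical and likewise preserves validity. Next I would check that the resulting word $w$ is a valid matching word. Because $\UD_0 = \varnothing$, every label introduced is eventually reverse-bumped out, so the multiset of unbarred letters in $w$ equals $\{1, \ldots, k\}$ with no repetitions. By construction the $j$-th ``add'' step in the right-to-left processing corresponds to the $j$-th barred letter from the right of $w$, so the barred letters read left-to-right are $\bar{k}, \bar{k-1}, \ldots, \bar{1}$. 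Moreover, each label $i$ is added at a reverse-time strictly later than the one at which it is popped out, so $i$ appears strictly before $\bar{i}$ in $w$.

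Finally, the two procedures are mutually inverse because reverse bumping literally undoes forward bumping, and adding (resp.\ removing) a fresh-labeled box is manifestly inverse to removing (resp.\ adding) a corner containing that label. The main pitfall to confirm is that the forward construction $w(\mathcal{M}) \mapsto \UD(\mathcal{M})$ is itself well-defined: when $\bar{i}$ is processed, the label $i$ must occupy a corner of the current tableau so that its removal yields a Young diagram. This holds because every label $j > i$ has already been both inserted and removed (as the barred letters appear in strictly decreasing order in $w(\mathcal{M})$), so $i$ is the maximum entry present and hence must lie in a corner. With this in hand, the round-trip identities $\UD \circ \Phi = \mathrm{id}$ and $\Phi \circ \UD = \mathrm{id}$ follow by a routine induction on $t$.
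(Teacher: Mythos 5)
Your proof is correct and follows exactly the route the paper indicates: the paper does not actually prove Theorem \ref{thm:SS} (it defers to Sundaram and Stanley), but its footnote on reconstructing the numbering from an up-down tableau --- the first box removed is $k$, the next is $k-1$, and reading the movie backwards one uses reverse bumping to handle added boxes --- is precisely your inverse map $\Phi$. The two details you single out, namely that the box removed at a barred letter is always a corner (since the decreasing order of the barred letters forces that label to be the maximum entry present) and that the counter label assigned in the backward pass coincides with the original unbarred letter, are exactly the right points to verify, and your arguments for them are sound.
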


In \cite[Section 8]{SundaramThesis}, Sundaram establishes a version of this theorem. 
Specifically, she uses a variant of the assignment $\mathcal{M} \mapsto w(\mathcal{M})$ that 
instead labels the right endpoints in a matching diagram by the symbols $\bar{1},\ldots,\bar{k}$ 
from \emph{left-to-right}. The resulting bijection between such words and oscillating tableaux 
requires the use of jeu-de-taquin. We prefer the bijection defined above 
(which appears e.g.~in Stanley's work \cite[Section 9]{Stanley}) since it avoids the use of jeu-de-taquin
and directly generalizes the Robinson--Schensted correspondence.

Indeed, suppose we perform this operation on a matching word $\mathcal{M}$ coming from a permutation, 
like $2431 \bar{4} \bar{3} \bar{2} \bar{1}$. 
One obtains a special kind of oscillating tableau $\UD$, where all boxes are added before any are removed. 
If the permutation corresponds under Robinson--Schensted to $(P,Q,\lambda)$, 
then the boxes will be added according to the tableau $Q$
and then removed according to the tableau $P$ 
(as the tableau $P$ agrees with the numbers labeling $\UD_k$ in this case). 
That is, $Q$ corresponds to the movie $(\UD_t)_{t=0}^k$ 
and $P$ to the movie $(\UD_{2k-t})_{t=0}^k$. 
In this way, the oscillating tableau records the data of both $P$ and $Q$, 
hence the Sundaram--Stanley bijection is compatible with the Robinson--Schensted bijection.

\subsection{Decreasing sequences and patterns}\label{sec:intropattern}

When visualizing a permutation with its string diagram, 
a decreasing sequence of length $\ell$ is the same as a choice of $\ell$ strings for
which any pair of strands will cross each other. 
Said differently, given any subsequence of length $\ell$, 
one can view the $\ell$ corresponding strands as a string diagram for a permutation in $\SG_\ell$ 
by ignoring the other $k-\ell$ strands. 
The subsequence is then decreasing if and only if the permutation in $\SG_\ell$ is the longest element, 
also known as the \emph{half twist}.

Similar ideas apply to matchings of $2k$ points. 
Let us say that an \emph{$\ell$-pattern} (or a \emph{pattern of length $\ell$}) in a matching $\mathcal{M}$ 
is a choice of $\ell$ strings for which any pair of strands will cross one another. 
This corresponds to the appearance of a half twist inside the matching diagram of the matching. 
From the perspective of matching words, an $\ell$-pattern corresponds to a decreasing subsequence 
of un-barred symbols that concludes before any of their barred versions appears. 
We will refer to such a subsequence as a \emph{coexistent decreasing subsequence}.

For example, consider the matching word
\[ 
w(\mathcal{M}) = \mathbf{7} \ 8 \ \mathbf{6} \ \mathbf{5} \ \bar{8} \ \mathbf{3} \ 
	{\color{red} \bar{7}} \ 4 \ 1 \ \bar{6} \ 2 \ \bar{5} \ \bar{4} \ \bar{3} \ \bar{2} \ \bar{1}
\]
from \eqref{eq:matchingex}.
The bold subsequence $7653$ is a coexistent decreasing subsequence, 
hence corresponds to a $4$-pattern in $\mathcal{M}$. 
On the other hand, the decreasing subsequence $76531$ is not coexistent,
since $1$ does not appear until after $\bar{7}$. 
In the matching diagram \eqref{eq:matchingex}, one can see that the $7$-labeled string 
and the $1$-labeled string do not cross. 
In fact, this matching does not contain a $5$-pattern.

In this paper we prove the following, which is a generalization of Theorem \ref{thm:longest}.

\begin{thm}\label{thm:longestC}
Let $\mathcal{M}$ be a matching of $2k$ points. 
The length of the longest pattern in $\mathcal{M}$ is equal to the maximum number of rows 
appearing in any partition in $\UD(\mathcal{M})$. 
\end{thm}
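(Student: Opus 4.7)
My plan is to prove Theorem \ref{thm:longestC} by extending Viennot's geometric argument to the up-down setting, directly generalizing the type $A$ proof of Theorem \ref{thm:longest} (i.e.\ Proposition \ref{prop:corners} and Corollary \ref{cor:decreasing}). The main tool is the Viennot diagram for up-down tableaux introduced in \S \ref{sec:updown}. Rather than attack the statement via the bumping algorithm directly (where the interplay of insertions and reverse-insertions is opaque, as Examples \ref{ex:PQ1} and \ref{ex:PQ2} already illustrate in the permutation case), I would work geometrically and derive the statement as a consequence of a single invariant tracked over time.

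First, given a matching $\mathcal{M}$ with matching word $w(\mathcal{M})$, I form its Viennot diagram by plotting, for each un-barred entry $i$ at position $t$ with value $v$, a point $(t,v)$, and interpreting each barred entry $\bar{i}$ as a ``closing event'' for the strand that opened at $i$. The shadow-line recipe then produces a collection of shadow lines, each of which opens when a new box is added to a new row during bumping and closes when that row loses a box through reverse bumping triggered by the corresponding barred entry.

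Second, I would establish the key invariant by induction on $t$: the number of shadow lines active at time $t$ (open but not yet closed) equals the number of rows of $\UD_t(\mathcal{M})$. The two inductive steps are:
\begin{itemize}
\item[(a)] processing an un-barred $i$: Schensted's bumping adds a box and creates a new row exactly when Viennot's shadow geometry spawns a new shadow line, which is precisely the content of Proposition \ref{prop:corners} in the classical setting;
\item[(b)] processing a barred $\bar{j}$: reverse row-insertion removes a box from a unique row, and the corresponding shadow line closes, with all other shadow lines extending unchanged.
\end{itemize}
Third, I would translate the invariant into the statement of the theorem. A coexistent decreasing subsequence of length $\ell$ in $w(\mathcal{M})$ produces $\ell$ points of the Viennot diagram that are pairwise in NE/SW position and simultaneously alive at some time $t^*$; such points lie on distinct shadow lines, hence $\ell$ shadow lines are active at $t^*$, so $\UD_{t^*}$ has at least $\ell$ rows. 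Conversely, given a time $t^*$ at which $\UD_{t^*}$ has $\ell$ rows, the invariant yields $\ell$ active shadow lines, and selecting one representative point from each produces a length-$\ell$ coexistent decreasing subsequence in $w(\mathcal{M})$, i.e.\ an $\ell$-pattern in $\mathcal{M}$.

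The main obstacle will be verifying the invariant across the ``removal'' steps in (b), which is the genuinely new feature beyond the type $A$ Viennot argument. One needs to check that processing $\bar{j}$ terminates exactly one shadow line, namely the one corresponding to the row whose length decreases under reverse row-insertion of the $j$-labeled box, and that the truncated diagram remains a valid Viennot diagram for the subsequent up-down tableau. This amounts to showing that Viennot's shadow-line construction commutes with the passage from $\UD_t$ to $\UD_{t+1}$ in both directions; once this local compatibility is in hand, the global invariant and hence Theorem \ref{thm:longestC} follow formally.
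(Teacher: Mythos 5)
There is a genuine gap, and it sits exactly where the theorem's content lives. Your step 3 asserts that $\ell$ points of the diagram that are ``pairwise in NE/SW position'' must ``lie on distinct shadow lines,'' and conversely that picking one representative point from each of $\ell$ active shadow lines yields a decreasing subsequence. For Viennot's actual shadow lines (the $c_1$-colored lattice paths of Remark \ref{rem:HowToView}) the first claim is false: those paths travel down and to the right, so a single one can carry an arbitrarily long decreasing sequence of points (already for $w=21$ both points lie on one shadow line, and the tableau has one column but two rows). It is \emph{increasing} subsequences whose points are forced onto distinct first-level shadow lines, and those count \emph{columns} (Proposition \ref{prop:increasing}). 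Rows are counted by the \emph{colors} $c_1<c_2<\cdots$ of the iterated construction, and the equivalence ``a length-$\ell$ decreasing sequence exists iff the color $c_\ell$ appears'' is precisely the nontrivial inner/outer corner game of Lemma \ref{lem:algorithm}. If instead you read ``shadow line'' as ``row of $\UD_t$, tracked over time,'' then your invariant in step 2 becomes a tautology and step 3 simply restates the theorem; either way the proposal never engages with the argument that actually converts rows into decreasing sequences and back. Likewise, ``selecting one representative point from each'' line does not automatically produce a monotone sequence; both directions require the careful traversal/selection that Lemma \ref{lem:algorithm} provides.

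You also mislocate the genuinely new difficulty of the up-down setting. The removal steps (your obstacle (b)) are handled trivially in the construction: the horizontal ray at height $a_t$ is simply terminated at a $\times$ and nothing else changes (Definition \ref{def:UDV}). The new ingredient is the observation that for any corner $(t,b)$ of the up-down Viennot diagram, the rectangle $[1,t]\times[1,b]$ contains no $\times$-marked points, because the barred letters $\bar{k},\ldots,\bar{1}$ occur in decreasing order in any matching word. This is what lets the classical corner game of Lemma \ref{lem:algorithm} run unchanged inside that rectangle, and --- crucially --- it is what guarantees that the decreasing subsequence so produced is \emph{coexistent}, i.e.\ corresponds to an $\ell$-pattern rather than merely $\ell$ pairwise-crossing-or-not strands. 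Your proposal never accounts for the ``coexistent'' qualifier, so even granting your invariant, the bridge from ``$\UD_{t^*}$ has $\ell$ rows'' to ``$\mathcal{M}$ contains an $\ell$-pattern'' is not established. To repair the argument: keep your up-down diagram and timeline invariant (that part matches Proposition \ref{prop:obvi} and the time-slice reading of Remark \ref{rem:timeline}), but replace step 3 with the localization-to-$\times$-free-rectangles argument followed by an application of Lemma \ref{lem:algorithm} in both directions, as in Proposition \ref{prop:main}.
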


This result, which in \cite{Kup,RubWes} is attributed to Sundaram, 
is stated in \cite[Theorem 16]{Stanley} without proof. 
A proof can be deduced from the more-general results on set partitions and ``vacillating tableau''
in \cite{CDDSY}; see Section 5 therein, and also interesting
work of Krattenthaler \cite{Krattenthaler} that re\"{e}stablishes the results in \cite{CDDSY} 
using Fomin's growth diagrams \cite{Fomin}.

Our proof of Theorem \ref{thm:longestC} in \S \ref{sec:updown} is based on 
a simple analogue of Viennot's geometric construction for oscillating tableaux.
By mimicking Viennot's proof of Theorem \ref{thm:longest} with one small modification, 
we prove that a matching has an $\ell$-pattern if and only if some partition in its oscillating tableau
has at least $\ell$ rows. We also give an easy algorithm to find this $\ell$-pattern.

\begin{rem}
As pointed out by the referee, 
an extension of Viennot's construction to skew oscillating tableaux is given in 
earlier work of Chauve--Dulucq \cite{ChauveDulucq}, 
but without the connection to decreasing subsequences.
In particular, they do not discuss Theorem \ref{thm:longestC}.
In the non-skew case we consider, their Viennot diagrams differ from ours
both in their construction and (slightly) in their final form;
however, they contain essentially the same information.
Our construction of the Viennot diagram encodes a timeline 
of the bumping algorithm for $w(\mathcal{M})$,
which the reader may find somewhat more straightforward.
	\end{rem}

\begin{rem}[Relation to $\spn$ representation theory]\label{rem:spn}
We conclude the introduction by discussing the connection to representation theory 
that motivated our interest in the Sundaram--Stanley bijection.

Consider the vector representation $V = \C^{2n}$ of $\spn$. 
Brauer--Schur--Weyl duality gives a surjective $\C$-algebra homomorphism
\begin{equation}\label{eq:BSW}
\BSW \colon 
\Brauer_k \to \End_{\spn}\big( V^{\otimes k} \big) \, .
\end{equation}
The Brauer algebra $\Brauer_k$ (and its quantum analogue the BMW algebra \cite{Murakami,BW})
is typically described diagrammatically using tangle diagrams, 
and it has a basis in bijection with matchings of $2k$ points. 
The kernel of the homomorphism $\BSW$ is less easy to describe than in type $A$;
see \cite[Section 2.3]{BERT} for some discussion and history. 
Nonetheless, as proved by Sundaram in \cite[Theorem 6.15]{SundaramThesis}, 
the dimension of the endomorphism ring on the right-hand side of \eqref{eq:BSW} 
(equivalently, the dimension of the space of $\spn$-invariant vectors in $V^{\otimes 2k}$)
precisely matches the number of oscillating tableaux such that all partitions appearing have $n$ or fewer rows.
By Theorem \ref{thm:longestC}, this agrees with the number of matchings of $2k$ points that avoid $(n+1)$-patterns.

In \cite{BERT}, we present (the image of) $\Brauer_k$ acting on $V^{\otimes k}$ (and its quantum analogue)
using matchings; see the category $\Web^\times(\spn)$ from Section 5.4 therein.
In \cite[Theorem 5.35]{BERT}, we show that $\Web^\times(\spn)$ has a spanning set 
consisting of matchings that avoid $(n+1)$-patterns. 
Using Theorem \ref{thm:longestC} and Sundaram's result, 
we are able to prove that the relevant $\Hom$-space in 
$\Web^\times(\spn)$ is isomorphic to $\End_{\spn}\big( V^{\otimes k} \big)$.
This was the reason for our interest in Theorem \ref{thm:longestC}.
\end{rem}

\begin{ack}
E.B. and B.E. were supported on this project by NSF CAREER grant DMS-1553032. 
B.E. was also supported by RTG grant DMS-2039316,
and by a stay at the Institute of Advanced Study, supported via NSF grant DMS-1926686. 
During revision, B.E. was supported by DMS-2201387.
D.E.V.R. and L.T. were partially supported by Simons Collaboration Grant 523992: 
\emph{Research on knot invariants, representation theory, and categorification}
and D.E.V.R. was also partially supported by NSF CAREER grant DMS-2144463.
The authors would like to thank Christian Krattenthaler for bringing our attention to \cite{Krattenthaler},
which provides an alternative route to Theorem \ref{thm:longestC} via Fomin's growth diagrams.
We also thank an anonymous referee for pointers to the literature and for their help in streamlining the presentation, 
especially of the material in \S \ref{sec:Viennot}.
\end{ack}

%
\section{Viennot's geometric construction and decreasing subsequences}\label{sec:Viennot}
%

In \cite{Viennot}, Viennot gives a graphical interpretation of the Robinson--Schensted correspondence for $\SG_k$
in terms of certain colored paths in the first quadrant
$\{(t,b) \mid t,b \geq 0\} \subset \R^2$.
We now recall the construction as presented in \cite[Section 3.6]{Sagan},
working with the running example of the permutation $w=2 \ 4 \ 3 \ 1$ in $\SG_4$.
The procedure is iterative: at each step, it starts with a collection of points 
in the first quadrant and outputs a \emph{shadow diagram}: 
a collection of non-intersecting lattice paths, called \emph{shadow lines},
that weakly decrease in the $b$-direction and weakly increase in the $t$-direction. 
The $i$th shadow diagram then determine the input points for the $(i+1)$st step.

To start, one plots the points $(j, w(j))$ for $1 \leq j \leq k$ and considers the 
region\footnote{Technically, Viennot considers unions of products of rays 
 $[j,\infty) \times [w(j),\infty)$, but the portion of this region lying outside the square
 $[0,k+1] \times [0,k+1]$ has no bearing on the construction.}
\[S_1=\bigcup_{j=1}^k [j,k+1] \times [w(j),k+1] \, . \]
The boundary of this region determines the first shadow line associated with this $1$st step, 
e.g.:
\[
\begin{tikzpicture}[anchorbase, scale=.5,smallnodes]
\foreach \x in {1,...,4}
{\draw (\x,0) node[below]{$\x$} to (\x,5);}
\foreach \y in {1,...,4}
{\draw (0,\y) node[left]{$\y$} to (5,\y);}
\draw[thick,->] (0,0) to (5,0) node[right]{$t$};
\draw[thick,->] (0,0) to (0,5) node[above]{$b$};
\fill[gray,opacity=.25] (1,5) to (1,2) to (5,2) to (5,5);
\fill[gray,opacity=.25] (2,5) to (2,4) to (5,4) to (5,5);
\fill[gray,opacity=.25] (3,5) to (3,3) to (5,3) to (5,5);
\fill[gray,opacity=.25] (4,5) to (4,1) to (5,1) to (5,5);
\draw[red, very thick] (1,5) to (1,2) to (4,2) to (4,1) to (5,1);
\node at (1,2){$\bullet$};
\node at (2,4){$\bullet$};
\node at (3,3){$\bullet$};
\node at (4,1){$\bullet$};
\end{tikzpicture}
\]
Next, one considers the subset $S_2 \subset S_1$ 
given as the union of the subsets $[j,k+1] \times [w(j),k+1] \subset S_1$ 
that do not intersect the first shadow line. 
The boundary of this region is the second shadow line associated 
with the $1$st step, e.g.
\begin{equation}\label{eq:V1}
\begin{tikzpicture}[anchorbase, scale=.5,smallnodes]
\foreach \x in {1,...,4}
{\draw (\x,0) node[below]{$\x$} to (\x,5);}
\foreach \y in {1,...,4}
{\draw (0,\y) node[left]{$\y$} to (5,\y);}
\draw[thick,->] (0,0) to (5,0) node[right]{$t$};
\draw[thick,->] (0,0) to (0,5) node[above]{$b$};
\fill[gray,opacity=.25] (2,5) to (2,4) to (5,4) to (5,5);
\fill[gray,opacity=.25] (3,5) to (3,3) to (5,3) to (5,5);
\draw[red, very thick] (1,5) to (1,2) to (4,2) to (4,1) to (5,1);
\draw[red, very thick] (2,5) to (2,4) to (3,4) to (3,3) to (5,3);
\node at (1,2){$\bullet$};
\node at (2,4){$\bullet$};
\node at (3,3){$\bullet$};
\node at (4,1){$\bullet$};
\end{tikzpicture}
\end{equation}
One then repeats, considering the subset $S_3 \subset S_2$ consisting of the subsets $[j,k+1] \times [w(j),k+1] \subset S_1$ 
that do not intersect the second shadow line, and use this to draw the third line. 
One repeats in this manner until $S_\ell = \varnothing$, at which point the $1$st step is complete and we have constructed 
the $1$st shadow diagram. 
(In our running example, this is simply the union of the two {\color{red}\textbf{red}} paths in \eqref{eq:V1}.)

All subsequent steps are identical to the $1$st, 
but with the starting collection of points
determined by the shadow diagram from the previous step. 
Specifically, observe that every corner in a shadow line takes the form
\[
\oc \colon
\begin{tikzpicture}[anchorbase, scale=.5,smallnodes]
\foreach \x in {1,...,4}
{\draw[gray] (\x,1) to (\x,4);}
\foreach \y in {1,...,4}
{\draw[gray] (1,\y) to (4,\y);}
\draw[very thick] (3,2) to (2,2) to (2,3);
\end{tikzpicture}
\quad \text{or} \quad
\ic\colon
\begin{tikzpicture}[anchorbase, scale=.5,smallnodes]
\foreach \x in {1,...,4}
{\draw[gray] (\x,1) to (\x,4);}
\foreach \y in {1,...,4}
{\draw[gray] (1,\y) to (4,\y);}
\draw[very thick] (3,2) to (3,3) to (2,3);
\end{tikzpicture} \, .
\]
We call these \emph{outer corners} and \emph{inner corners}, respectively. 
(In \cite{Sagan}, inner corners are called \emph{northeast corners}, 
while outer corners are not given a name.)
For $i \geq 1$, the starting collection of points for the $(i+1)$st step are the inner corners of the 
shadow diagram from the $i$th step.
For example, the $1$st, $2$nd and $3$rd (last) steps in our running example 
produce the following shadow diagrams
(with inner corners marked by $\circ$):
\[
\begin{tikzpicture}[anchorbase, scale=.5,smallnodes]
\foreach \x in {1,...,4}
{\draw (\x,0) node[below]{$\x$} to (\x,5);}
\foreach \y in {1,...,4}
{\draw (0,\y) node[left]{$\y$} to (5,\y);}
\draw[thick,->] (0,0) to (5,0) node[right]{$t$};
\draw[thick,->] (0,0) to (0,5) node[above]{$b$};
\draw[red, very thick] (1,5) to (1,2) to (4,2) to (4,1) to (5,1);
\draw[red, very thick] (2,5) to (2,4) to (3,4) to (3,3) to (5,3);
\node at (1,2){$\bullet$};
\node at (2,4){$\bullet$};
\node at (3,3){$\bullet$};
\node at (4,1){$\bullet$};
\node at (3,4){$\circ$};
\node at (4,2){$\circ$};
\end{tikzpicture}
\qquad , \qquad
\begin{tikzpicture}[anchorbase, scale=.5,smallnodes]
\foreach \x in {1,...,4}
{\draw (\x,0) node[below]{$\x$} to (\x,5);}
\foreach \y in {1,...,4}
{\draw (0,\y) node[left]{$\y$} to (5,\y);}
\draw[thick,->] (0,0) to (5,0) node[right]{$t$};
\draw[thick,->] (0,0) to (0,5) node[above]{$b$};
\draw[orange, very thick] (3,5) to (3,4) to (4,4) to (4,2) to (5,2);
\node at (3,4){$\bullet$};
\node at (4,2){$\bullet$};
\node at (4,4){$\circ$};
\end{tikzpicture}
\qquad , \qquad
\begin{tikzpicture}[anchorbase, scale=.5,smallnodes]
\foreach \x in {1,...,4}
{\draw (\x,0) node[below]{$\x$} to (\x,5);}
\foreach \y in {1,...,4}
{\draw (0,\y) node[left]{$\y$} to (5,\y);}
\draw[thick,->] (0,0) to (5,0) node[right]{$t$};
\draw[thick,->] (0,0) to (0,5) node[above]{$b$};
\draw[green, very thick] (4,5) to (4,4) to (5,4);
\node at (4,4){$\bullet$};
\end{tikzpicture} \, .
\]
As in our running example, we will choose a totally ordered set of colors 
$c_1 < c_2 < \cdots$ and color the $i$th shadow diagram with the color $c_i$.

\begin{defn}[Viennot's geometric construction]\label{def:Viennot1}
Let $w \in \SG_k$ be a permutation written in one-line notation as $w = a_1 \cdots a_k$, 
and let $\CS_k=\{c_1 < \cdots < c_k\}$ be a totally ordered set of $k$ colors. 
The \emph{Viennot diagram} of $w$ is the union of its shadow diagrams, 
with the $i$th diagram colored $c_i$.
	\end{defn}

\begin{example}\label{ex:SaganV}
For our running example permutation $w=2 \ 4 \ 3 \ 1$ in $\SG_4$ and ordering of colors
{\color{red}\textbf{red}} $<$ {\color{myorange}\textbf{orange}} $<$ {\color{green} \textbf{green}} 
($<$ {\color{blue}\textbf{blue}}), the Viennot diagram is:
\[
\begin{tikzpicture}[anchorbase, scale=.5,smallnodes]
\foreach \x in {1,...,4}
{\draw (\x,0) node[below]{$\x$} to (\x,5);}
\foreach \y in {1,...,4}
{\draw (0,\y) node[left]{$\y$} to (5,\y);}
\draw[thick,->] (0,0) to (5,0) node[right]{$t$};
\draw[thick,->] (0,0) to (0,5) node[above]{$b$};
\draw[very thick, red] (1,2) to (1,5);
\draw[very thick, red] (1,2) to (4,2); \draw[very thick, myorange] (4,2) to (5,2); 
\node at (1,2){$\bullet$};
\draw[very thick, red] (2,4) to (2,5);
\draw[very thick, red] (2,4) to (3,4); \draw[very thick, myorange] (3,4) to (4,4); 
	\draw[very thick, green] (4,4) to (5,4);  
\node at (2,4){$\bullet$};
\draw[very thick, red] (3,3) to (3,4); \draw[very thick, myorange] (3,4) to (3,5); 
\draw[very thick, red] (3,3) to (5,3);
\node at (3,3){$\bullet$};
\draw[very thick, red] (4,1) to (4,2); \draw[very thick, myorange] (4,2) to (4,4); 
	\draw[very thick, green] (4,4) to (4,5);  
\draw[very thick, red] (4,1) to (5,1);
\node at (4,1){$\bullet$};
\end{tikzpicture}
\]
Observe that all $k$ colors need not appear in a Viennot diagram.
\end{example}

\begin{example}\label{ex:big}
The following is the Viennot diagram for the permutation
\[
w=2 \ 9 \ 1 \ 15 \ 4 \ 7 \ 13 \ 18 \ 11 \ 19 \ 5 \ 14 \ 3 \ 10 \ 6 \ 17 \ 8 \ 16 \ 12
\]
in $\SG_{19}$. 
Our colors are 
{\color{red}\textbf{red}} $<$ {\color{myorange}\textbf{orange}} $<$ {\color{green} \textbf{green}} 
$<$ {\color{blue}\textbf{blue}} $<$ 
{\textbf{black}}  ($< \cdots$).
\[
\begin{tikzpicture}[anchorbase, scale=.3,tinynodes]
\foreach \x in {1,...,19}
{\draw (\x,0) node[below]{$\x$} to (\x,20);}
\foreach \y in {1,...,19}
{\draw (0,\y) node[left]{$\y$} to (20,\y);}
\draw[thick,->] (0,0) to (20,0) node[right]{$t$};
\draw[thick,->] (0,0) to (0,20) node[above]{$b$};
\draw[very thick, red] (1,2) to (1,20);
\draw[very thick, red] (1,2) to (3,2); \draw[very thick, myorange] (3,2) to (20,2);
\node at (1,2){$\bullet$};
\draw[very thick, red] (2,9) to (2,20);
\draw[very thick, red] (2,9) to (5,9); \draw[very thick, myorange] (5,9) to (11,9);
	\draw[very thick, green] (11,9) to (13,9); \draw[very thick, blue] (13,9) to (20,9);
\node at (2,9){$\bullet$};
\draw[very thick, red] (3,1) to (3,2); \draw[very thick, myorange] (3,2) to (3,20);
\draw[very thick, red] (3,1) to (20,1);
\node at (3,1){$\bullet$};
\draw[very thick, red] (4,15) to (4,20);
\draw[very thick, red] (4,15) to (6,15); \draw[very thick, myorange] (6,15) to (9,15);
	\draw[very thick, green] (9,15) to (11,15); \draw[very thick, blue] (11,15) to (13,15);
		\draw[ultra thick] (13,15) to (20,15);
\node at (4,15){$\bullet$};
\draw[very thick, red] (5,4) to (5,9); \draw[very thick, myorange] (5,9) to (5,20);
\draw[very thick, red] (5,4) to (13,4); \draw[very thick, myorange] (13,4) to (20,4);
\node at (5,4){$\bullet$};
\draw[very thick, red] (6,7) to (6,15); \draw[very thick, myorange] (6,15) to (6,20);
\draw[very thick, red] (6,7) to (11,7); \draw[very thick, myorange] (11,7) to (13,7);
	\draw[very thick, green] (13,7) to (20,7);
\node at (6,7){$\bullet$};
\draw[very thick, red] (7,13) to (7,20);
\draw[very thick, red] (7,13) to (9,13); \draw[very thick, myorange] (9,13) to (14,13);
	\draw[very thick, green] (14,13) to (15,13); \draw[very thick, blue] (15,13) to (20,13);
\node at (7,13){$\bullet$};
\draw[very thick, red] (8,18) to (8,20);
\draw[very thick, red] (8,18) to (12,18); \draw[very thick, myorange] (12,18) to (17,18);
	\draw[very thick, green] (17,18) to (19,18); \draw[very thick, blue] (19,18) to (20,18);
\node at (8,18){$\bullet$};
\draw[very thick, red] (9,11) to (9,13); \draw[very thick, myorange] (9,13) to (9,15);
	\draw[very thick, green] (9,15) to (9,20);
\draw[very thick, red] (9,11) to (14,11); \draw[very thick, myorange] (14,11) to (15,11);
	\draw[very thick, green] (15,11) to (20,11);
\node at (9,11){$\bullet$};
\draw[very thick, red] (10,19) to (10,20);
\draw[very thick, red] (10,19) to (16,19); \draw[very thick, myorange] (16,19) to (18,19);
	\draw[very thick, green] (18,19) to (20,19);
\node at (10,19){$\bullet$};
\draw[very thick, red] (11,5) to (11,7); \draw[very thick, myorange] (11,7) to (11,9);
	\draw[very thick, green] (11,9) to (11,15); \draw[very thick, blue] (11,15) to (11,20);
\draw[very thick, red] (11,5) to (20,5);
\node at (11,5){$\bullet$};
\draw[very thick, red] (12,14) to (12,18); \draw[very thick, myorange] (12,18) to (12,20);
\draw[very thick, red] (12,14) to (17,14); \draw[very thick, myorange] (17,14) to (20,14);
\node at (12,14){$\bullet$};
\draw[very thick, red] (13,3) to (13,4); \draw[very thick, myorange] (13,4) to (13,7);
	\draw[very thick, green] (13,7) to (13,9); \draw[very thick, blue] (13,9) to (13,15);
		\draw[ultra thick] (13,15) to (13,20);
\draw[very thick, red] (13,3) to (20,3);
\node at (13,3){$\bullet$};
\draw[very thick, red] (14,10) to (14,11); \draw[very thick, myorange] (14,11) to (14,13);
	\draw[very thick, green] (14,13) to (14,20);
\draw[very thick, red] (14,10) to (15,10); \draw[very thick, myorange] (15,10) to (20,10);
\node at (14,10){$\bullet$};
\draw[very thick, red] (15,6) to (15,10); \draw[very thick, myorange] (15,10) to (15,11);
	\draw[very thick, green] (15,11) to (15,13); \draw[very thick, blue] (15,13) to (15,20);
\draw[very thick, red] (15,6) to (20,6);
\node at (15,6){$\bullet$};
\draw[very thick, red] (16,17) to (16,19); \draw[very thick, myorange] (16,19) to (16,20);
\draw[very thick, red] (16,17) to (18,17); \draw[very thick, myorange] (18,17) to (19,17);
	\draw[very thick, green] (19,17) to (20,17);
\node at (16,17){$\bullet$};
\draw[very thick, red] (17,8) to (17,14); \draw[very thick, myorange] (17,14) to (17,18);
	\draw[very thick, green] (17,18) to (17,20);
\draw[very thick, red] (17,8) to (20,8);
\node at (17,8){$\bullet$};
\draw[very thick, red] (18,16) to (18,17); \draw[very thick, myorange] (18,17) to (18,19);
	\draw[very thick, green] (18,19) to (18,20);
\draw[very thick, red] (18,16) to (19,16); \draw[very thick, myorange] (19,16) to (20,16);
\node at (18,16){$\bullet$};
\draw[very thick, red] (19,12) to (19,16); \draw[very thick, myorange] (19,16) to (19,17);
	\draw[very thick, green] (19,17) to (19,18); \draw[very thick, blue] (19,18) to (19,20);
\draw[very thick, red] (19,12) to (20,12);
\node at (19,12){$\bullet$};
\end{tikzpicture}
\]
\end{example}

\smallskip

The connection to the Robinson--Schensted correspondence is the following result of Viennot.

\begin{thm}[{\cite{Viennot}}]\label{thm:V}
Let $w \in \SG_k$ be a permutation and consider the Viennot diagram assigned to $w$. 
Then, $\RS(w) = (P,Q,\lambda)$, where $P$ is determined by the colors appearing 
on the right of the Viennot diagram, and $Q$ is determined by the colors appearing on top. 
More precisely, the $j^{th}$ row of $P$ is filled with the $b$-coordinates of the $c_j$-colored 
shadow lines meeting the line $t=k+1$. 
Similarly, the $j^{th}$ row of $Q$ is filled with the $t$-coordinates of the $c_j$-colored 
shadow lines meeting the line $b = k+1$. \qed
\end{thm}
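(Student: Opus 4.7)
The plan is to prove the theorem by induction on the time coordinate $t \in \{1, \ldots, k\}$, showing that the Viennot construction simulates Schensted's bumping algorithm step by step. Concretely, the inductive claim is twofold: (i) after step $t$, the colors of the $t$ horizontal segments crossing the vertical line $t' = t + \tfrac{1}{2}$ encode the insertion tableau $P_t$ for the prefix word $a_1 a_2 \cdots a_t$, in the sense that the $j$-th row of $P_t$ consists precisely of those heights $a_i$ whose corresponding horizontal segment is colored $c_j$; and (ii) the color of the vertical ray at time $t$ above its topmost color change is $c_j$, where $j$ is the row in which Schensted's algorithm places the new box at step $t$. Setting $t = k$ in (i) yields the description of $P$, since the rightmost horizontal segments are exactly the ones crossing $t' = k + \tfrac{1}{2}$, and aggregating (ii) across all $t$ gives the description of $Q$, since the $j$-th row of $Q$ records exactly those $t$ at which step $t$ contributes a box to row $j$ of $\lambda_t$.

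The base case $t = 1$ is immediate: a single point $(1, a_1)$, a single horizontal segment at height $a_1$ colored $c_1$, and a vertical ray $c_1$ all the way to the top, matching the bumping result that $P_1$ consists of one box in row $1$ labeled $a_1$. For the inductive step, the key observation is that Viennot's local coloring rules precisely mimic bumping. The vertical ray leaving $(t, a_t)$ starts colored $c_1$, representing the insertion of $a_t$ into row $1$. By the inductive hypothesis applied at time $t-1$, the first $c_1$-colored incoming horizontal segment strictly above height $a_t$ occurs at the smallest entry $b^1$ of row $1$ of $P_{t-1}$ exceeding $a_t$ --- exactly the entry bumped out of row $1$ by Schensted. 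At $(t, b^1)$ the coloring rules advance the vertical ray to $c_2$ and recolor the outgoing horizontal segment at height $b^1$ from $c_1$ to $c_2$, reflecting the migration of $b^1$ to row $2$. The argument iterates up through the rows: each color advance of the vertical ray from $c_j$ to $c_{j+1}$ marks the bumping of an entry from row $j$ to row $j+1$, and the vertical ray terminates at color $c_J$ precisely when a row finally receives an appended box (namely row $J$), justifying (ii).

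The main obstacle is verifying this simulation carefully, especially checking that the rule ``outgoing horizontal matches incoming unless the incoming vertical and horizontal agree in color, in which case outgoing is one color higher'' correctly picks out the unique bumped entry in each row while leaving the rest unchanged, and that intermediate vertical-ray colors track the current row of the bumping process at every height. One must separately handle the ``appending'' subcase in which, at some stage, the bumping does not displace any entry: here the vertical ray simply persists at its current color $c_j$ all the way to $b = k+1$ without crossing any further incoming horizontal of matching color, since the inductive hypothesis guarantees that the relevant row of $P_{t-1}$ contains no entry strictly above the relevant height. Once these cases are verified, the inductive claim follows, and specializing to $t = k$ gives the theorem: the horizontal segments meeting $t = k+1$ carry the color data of $P = P_k$ by (i), while the vertical segments meeting $b = k+1$ carry, for each $t$, the color $c_{j(t)}$ recording the row of the new box added at step $t$, which is precisely the defining data of $Q$.
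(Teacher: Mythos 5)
The paper does not actually prove Theorem \ref{thm:V}: it is stated with only a citation to \cite{Viennot} and no argument is given, so there is nothing in-house to compare your proof against. Judged on its own terms, your induction on the time coordinate is the standard and correct way to establish the result, and your two-part inductive claim (horizontal colors across $t+\tfrac12$ encode $P_t$; the terminal color of the ray at time $t$ records the row receiving the new box) is exactly the right invariant. The point you flag as ``the main obstacle'' does go through, and it is worth recording why: writing $b^0=a_t$ and letting $b^j$ be the smallest entry of row $j$ of $P_{t-1}$ exceeding $b^{j-1}$, the vertical ray drawn at time $t$ carries color $c_j$ precisely on the height interval $(b^{j-1},b^j]$; since Schensted's rule guarantees that row $j$ has no entry strictly between $b^{j-1}$ and $b^j$, the only height at which a $c_j$-colored incoming horizontal segment meets a $c_j$-colored vertical segment from below is $b^j$ itself, so exactly that one outgoing horizontal is promoted to $c_{j+1}$ and every other entry of every row keeps its color. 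Two small housekeeping items you should make explicit: Definition \ref{def:Viennot} does not literally assign a color to the new horizontal segment leaving $(t,a_t)$ (there is no incoming segment $(t-1,a_t)\to(t,a_t)$ to inherit from), so you must declare that it is colored $c_1$ --- this is what places $a_t$ in row $1$ of $P_t$; and you should observe that vertical rays are never recolored at later time steps, so reading $Q$ off the line $b=k+1$ in the finished diagram genuinely recovers, for each $t$, the row in which the step-$t$ box was added.
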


\begin{example} For the permutation in Example \ref{ex:big}, the first row of $P$ has entries $(1,3,5,6,8,12)$, 
coming from the right ends of the {\color{red}\textbf{red}} shadow lines. 
The second row of $P$ has entries $(2,4,10,14,16)$, 
coming from the {\color{myorange}\textbf{orange}} shadow lines. 
Since $5$ colors appear in the diagram, $P$ has $5$ nonempty rows. 
Meanwhile, the first row of $Q$ has entries $(1,2,4,7,8,10)$, 
coming from the tops of the {\color{red}\textbf{red}} shadow lines. 
\end{example}

We now recall the connection between Viennot's construction and decreasing subsequences.
We call a sequence $(t_1,b_1) , \ldots, (t_r,b_r)$ of points in a Viennot diagram a
\emph{length $r$ decreasing sequence} if $t_1 < \cdots < t_r$ and $b_1 > \cdots >b_r$.
The following is essentially a retooling
of Viennot's argument for the decreasing subsequence case of Theorem \ref{thm:longest}.

\begin{lem} \label{lem:algorithm}
A Viennot diagram contains a length $r$ decreasing sequence of 
$c$-colored inner corners if and only if it contains a length $r+1$ decreasing sequence of 
$c$-colored outer corners. \end{lem}

\begin{proof}
Note that each $c$-colored inner corner 
$\ic = (t,b)$ 
determines two associated $c$-colored outer corners 
$l(\ic) = (l(t),b)$ and $d(\ic) = (t,d(b))$
with 
$l(t) < t$ and $d(b) < b$,
which are obtained by following the shadow line meeting $\ic$ $l$eftward and $d$ownward 
until they arrive at outer corners. 
These outer corners necessarily exist, 
since the $c$-colored shadow line never leaves the first quadrant, 
thus cannot travel leftward or downward from an inner corner indefinitely.

Now, suppose that a Viennot diagram contains a length $r$ decreasing sequence 
$\ic_1 , \ldots , \ic_r$ of $c$-colored inner corners, 
and write $\ic_j = (t_j,b_j)$.
For $1 \leq j \leq r+1$, consider the following subsets:
\[
B_j := 
\begin{cases}
(-\infty,t_1) \times [b_1,\infty) & \text{if } j=1 \\
[t_{j-1},t_j) \times [b_j,b_{j-1}) & 2 \leq j \leq r \\
[t_r,\infty) \times (-\infty,b_r) & j=r+1 \, .
\end{cases} 
\]
Aside from the edge cases $j = 1$ and $j = r+1$, 
$B_j$ is the rectangle between two consecutive corners in the decreasing sequence. 
In Example \ref{ex:grayboxes} below, one can see the rectangles $B_j$ shaded in gray. 
By construction, the region $B_{j+1}$ lies completely below and to the right of the region $B_j$. 
Thus any collection of $r+1$ points with each point lying 
in a distinct region $B_j$ will yield a decreasing sequence.

It suffices to show that each $B_j$ contains at least one $c$-colored outer corner. 
For the edge cases, we must have that $l(\ic_1) \in B_0$ and $d(\ic_r) \in B_{r+1}$. 
For $2 \le j \le r$, since distinct shadow lines in a shadow diagram never intersect, 
at least one of $d(\ic_{j-1})$ or $l(\ic_j)$ lies in $B_j$, because the
situation must fit one of the following schematics\footnote{It is a game of chicken. 
A vehicle leaves $\ic_j$ traveling left, and another leaves $\ic_{j-1}$ traveling down, 
on a deadly collision course! Either they collide (please fetch the outer cor(o)ner), 
or (at least) one of the vehicles is a chicken, veering off before the potential collision.}: 
\begin{equation}\label{eq:chicken} 
\begin{tikzpicture}[anchorbase, scale=.5,smallnodes] \path[fill=gray,opacity=.5] (0,3)
rectangle (2,0); \node at (1,1.5){$B_j$}; \draw[thick,gray] (0,3) node[above,black]{$\ic_{j-1}$} to (0,0) to (2,0) node[right,black]{$\ic_{j}$}; \draw[thick,gray,dashed] (0,3) to
(2,3) to (2,0);
\draw[very thick,blue] (-1,3) to (0,3) to (0,0) to (2,0) to (2,-1);
\node at (2,0){$\bullet$};
\node at (0,3){$\bullet$};  
\end{tikzpicture}
\, , \quad
\begin{tikzpicture}[anchorbase, scale=.5,smallnodes]
\path[fill=gray,opacity=.5] (0,3) rectangle (2,0);
\node at (1,1.5){$B_j$};
\draw[thick,gray] (0,3) node[above,black]{$\ic_{j-1}$} to (0,0) 
	to (2,0) node[right,black]{$\ic_{j}$};
\draw[thick,gray,dashed] (0,3) to (2,3) to (2,0);
\draw[very thick,blue] (-1,3) to (0,3) to (0,-1);
\draw[very thick,blue] (2,-1) to (2,0) to (1,0) to (1,1);
\node at (2,0){$\bullet$};
\node at (0,3){$\bullet$};  
\end{tikzpicture}
\, , \quad
\begin{tikzpicture}[anchorbase, scale=.5,smallnodes]
\path[fill=gray,opacity=.5] (0,3) rectangle (2,0);
\node at (1,1.5){$B_j$};
\draw[thick,gray] (0,3) node[above,black]{$\ic_{j-1}$} to (0,0) 
	to (2,0) node[right,black]{$\ic_{j}$};
\draw[thick,gray,dashed] (0,3) to (2,3) to (2,0);
\draw[very thick,blue] (-1,3) to (0,3) to (0,2) to (1,2);
\draw[very thick,blue] (2,-1) to (2,0) to (-1,0);
\node at (2,0){$\bullet$};
\node at (0,3){$\bullet$};  
\end{tikzpicture}
\, , \quad
\begin{tikzpicture}[anchorbase, scale=.5,smallnodes]
\path[fill=gray,opacity=.5] (0,3) rectangle (2,0);
\node at (1,1.5){$B_j$};
\draw[thick,gray] (0,3) node[above,black]{$\ic_{j-1}$} to (0,0) 
	to (2,0) node[right,black]{$\ic_{j}$};
\draw[thick,gray,dashed] (0,3) to (2,3) to (2,0);
\draw[very thick,blue] (-1,3) to (0,3) to (0,2) to (1,2);
\draw[very thick,blue] (2,-1) to (2,0) to (1,0) to (1,1);
\node at (2,0){$\bullet$};
\node at (0,3){$\bullet$};  
\end{tikzpicture}
\end{equation}
This establishes one direction of Lemma \ref{lem:algorithm}.

The other direction is entirely analogous. 
Let $\oc_1 , \ldots , \oc_{r+1}$ be a decreasing sequence of 
$c$-colored outer corners, and write $\oc_j = (s_j,u_j)$. 
If we consider the subsets
\[
B^j := (s_j,s_{j+1}] \times (u_{j+1},u_j]
\]
for $1 \leq j \leq r$, then each such box must contain a $c$-colored inner corner 
(the schematic is obtained from the one in \eqref{eq:chicken} by rotating each picture $180$ degrees).
Choosing one such inner corner in each $B^j$ gives the desired decreasing sequence.
\end{proof}

\begin{prop}\label{prop:corners}
Let $w = a_1 \cdots a_k$ be a permutation in $\SG_k$ written in one-line notation, 
and let $c_1 < \cdots < c_k$ 
be the totally ordered set of colors in Viennot's geometric construction. 
The color $c_\ell$ appears in the Viennot diagram for $w$ if and only if
the list $a_1, \ldots, a_k$ contains a decreasing subsequence of length $\ell$.
\end{prop}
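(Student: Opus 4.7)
The plan is to prove the two implications separately. Both rely on the following structural fact from Definition \ref{def:Viennot}: a color $c_j$ with $j \geq 2$ can only first appear at a ``bump point'' $(t, a_i)$ with $i < t$ and $a_i > a_t$, where the incoming horizontal segment $(t-1, a_i) \to (t, a_i)$ and the incoming vertical segment $(t, a_i - 1) \to (t, a_i)$ are both colored $c_{j-1}$; in this situation both the outgoing horizontal and the outgoing vertical at $(t, a_i)$ acquire the new color $c_j$. In conjunction with the lattice-path viewpoint from Remark \ref{rem:HowToView}, this allows us to trace $c_\ell$-colored segments back to their origins.

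For the $(\Rightarrow)$ direction, I would induct on $\ell$. Given a $c_\ell$-colored segment, trace it leftward and downward along $c_\ell$-colored edges until reaching the bump point $(t_\ell, a_{i_\ell})$ where $c_\ell$ was first produced. Declare $a_{t_\ell}$ the final (smallest) term of the sought decreasing subsequence, noting that $i_\ell < t_\ell$ and $a_{i_\ell} > a_{t_\ell}$. The incoming horizontal at $(t_\ell, a_{i_\ell})$ is a $c_{\ell-1}$-colored segment lying entirely in the Viennot sub-diagram for the prefix $a_1, \ldots, a_{t_\ell - 1}$, so by the inductive hypothesis it yields a decreasing subsequence $a_{t_1} > \cdots > a_{t_{\ell-1}} = a_{i_\ell}$ of that prefix. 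Appending $a_{t_\ell}$ gives the desired length-$\ell$ decreasing subsequence; the base case $\ell = 1$ is immediate since the initial horizontal emitted from any point is $c_1$-colored.

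For the $(\Leftarrow)$ direction, I would induct on the time parameter $t$, strengthening the claim to: for every $0 \leq t \leq k$ and every height threshold $b \geq 1$, the maximum color index among horizontal segments of height at most $b$ lying between columns $t$ and $t+1$ equals the length of the longest decreasing subsequence of $a_1, \ldots, a_t$ whose entries are all at most $b$. Taking $t = k$ and $b = k$ recovers the proposition. The inductive step from $t$ to $t+1$ splits: for $b < a_{t+1}$, nothing at height at most $b$ is disturbed; for $b \geq a_{t+1}$, one tracks the bumps the vertical ray at column $t+1$ undergoes as it climbs from $(t+1, a_{t+1})$ through the heights in $(a_{t+1}, b]$, using that the colors of the inherited horizontals at those heights are governed, via the inductive hypothesis at all intermediate thresholds, by decreasing-subsequence data for $a_1, \ldots, a_t$.

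The main obstacle is the $b \geq a_{t+1}$ case. It is \emph{not} true that the horizontal at each height $a_{t_j}$ of a witnessing decreasing subsequence carries color $c_j$: for instance in $w = 2\,4\,3\,1$ the horizontal at height $3$ remains $c_1$-colored, yet $3, 1$ is a length-$2$ decreasing subsequence. Consequently the argument cannot be purely height-local. Instead, the new $c_{L+1}$-colored horizontal segment between columns $t+1$ and $t+2$ must emerge via a cascade of bumps along the vertical at column $t+1$: each bump promotes the vertical color by one level, with the full chain $c_1, c_2, \ldots, c_L$ forced by the inductive hypothesis applied at a suitably chosen ascending sequence of height thresholds between $a_{t+1}$ and $b$. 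Formalizing this cascade---verifying that the data provided by the inductive hypothesis at all intermediate thresholds suffices to guarantee the full chain of matching colors---is the crux of the $(\Leftarrow)$ step.
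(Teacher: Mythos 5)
There is a genuine gap in your $(\Rightarrow)$ direction, and it sits exactly where the real work lies. Your induction extracts one new term per color level: at the outer corner $(t_\ell,a_{i_\ell})$ where $c_\ell$ is created you append $a_{t_\ell}$, and you assert that the inductive hypothesis applied to the incoming $c_{\ell-1}$-colored horizontal produces a decreasing subsequence whose \emph{last} term equals the height $a_{i_\ell}$ of that segment. That assertion is false, and it is also inconsistent with your own construction, which outputs a subsequence ending at the base $a_{t_\ell}$ of the bumping ray, not at the height. Concretely, take $w = 4\,1\,3\,2$. The color $c_3$ first appears at $(4,4)$, created by the ray based at $(4,2)$; the incoming $c_2$-colored horizontal is $(3,4)\to(4,4)$, at height $4$, and there is no length-$2$ decreasing subsequence of $4,1,3$ ending at $4$. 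Worse, the subsequence your recursion actually produces from that $c_2$ segment is $4>1$ (tracing the height-$4$ horizontal back to its $c_2$-creating bump at column $2$, whose ray is based at $a_2=1$), and appending $a_4=2$ yields $4,1,2$, which is not decreasing. The correct answer $4>3>2$ needs the middle term $3$, which lives at a different height and is invisible to a recursion that only ever inspects the horizontal at height $4$ and the rays that bump it. What is missing is control over \emph{where} the smaller terms of the recursively produced subsequence land. The paper obtains this by working with whole decreasing sequences of $c$-colored inner and outer corners at once (Lemma \ref{lem:algorithm}) and invoking the fact that same-colored lattice paths never cross to locate an outer corner inside each rectangle spanned by consecutive inner corners (the schematics in \eqref{eq:chicken}): from a $c$-colored inner corner one may pass to an outer corner either leftward or downward, and which choice is forced depends on the neighboring corners in the sequence, something a single-segment induction cannot see.

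Your $(\Leftarrow)$ direction proposes a plausible strengthened invariant (the maximal color among horizontals of height at most $b$ at time $t$ equals the length of the longest decreasing subsequence of $a_1,\ldots,a_t$ with entries at most $b$), but you explicitly leave its inductive step unproved and correctly observe that the naive height-local version fails; as written this half is a statement of intent rather than a proof. For comparison, the paper derives both implications from the single Lemma \ref{lem:algorithm}: a length-$r$ decreasing sequence of $c$-colored inner corners exists if and only if a length-$(r+1)$ decreasing sequence of $c$-colored outer corners does, and since $c_{i+1}$-colored outer corners coincide with $c_i$-colored inner corners while $c_1$-colored outer corners are exactly the points $(t,a_t)$, iterating converts one $c_\ell$-colored corner into a length-$\ell$ decreasing subsequence and back. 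I would encourage you to recast your induction in terms of decreasing sequences of corners rather than individual segments; the non-crossing of the colored lattice paths is the one geometric input you have not used, and it is indispensable.
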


\begin{proof}
To begin, we record the following observations concerning a Viennot diagram:
\begin{enumerate}
\item For $i\ge 1$, each $c_{i+1}$-colored outer corner meets exactly one $c_{i}$-colored inner corner, 
and vice versa:
\[
\begin{tikzpicture}[anchorbase, scale=.5,smallnodes]
\foreach \x in {2,...,4}
{\draw[gray] (\x,2) to (\x,4);}
\foreach \y in {2,...,4}
{\draw[gray] (2,\y) to (4,\y);}
\draw[very thick, green] (3,2) node[below]{$c_i$} to (3,3) to (2,3);
\draw[very thick, blue] (4,3) node[right]{$c_{i+1}$} to (3,3) to (3,4);
\end{tikzpicture}
\]
\item Every shadow line contains at least one outer corner.
\item The union of the $c_1$-colored outer corners is equal to the set $\{(t,a_t) \mid 1 \leq t \leq k\}$.
\end{enumerate}

By (1), a length $r$ decreasing sequence of $c_i$-colored outer corners is 
also a length $r$ decreasing sequence of $c_{i-1}$-colored inner corners. 
Thus, Lemma \ref{lem:algorithm} allows us to construct a length $r+1$ decreasing sequence of $c_{i-1}$-colored outer 
corners from a length $r$ decreasing sequence of $c_i$-colored outer corners, 
\emph{and} vice-versa.

If the color $c_\ell$ appears, by (2) we can choose one $c_\ell$-colored outer corner 
and thus iteratively construct a length $\ell$ decreasing sequence of $c_1$-colored outer corners.
By (3), this sequence of outer corners yields the desired decreasing subsequence.
Conversely, given a length $\ell$ decreasing sequence of $c_1$-colored outer corners, 
we can run the procedure in reverse to obtain one $c_\ell$-colored outer corner, 
implying that the color $c_{\ell}$ appears in the diagram.
\end{proof}

\begin{example} \label{ex:grayboxes}
We illustrate the passage from inner to outer corners in Lemma \ref{lem:algorithm} using the 
Viennot diagram from Example \ref{prop:corners}. 
The decreasing sequence of {\color{myorange}\textbf{orange}} inner corners 
determines the indicated decreasing sequence of outer corners.
(Here, we've removed all shadow lines except those that are {\color{myorange}\textbf{orange}}.)
\[
\begin{tikzpicture}[anchorbase, scale=.3,tinynodes]
\path[fill=gray,opacity=.25] (0,20) rectangle (9,15);
\path[fill=gray,opacity=.25] (9,15) rectangle (11,9);
\path[fill=gray,opacity=.25] (11,9) rectangle (13,7);
\path[fill=gray,opacity=.25] (13,7) rectangle (20,0);
\foreach \x in {1,...,19}
{\draw (\x,0) node[below]{$\x$} to (\x,20);}
\foreach \y in {1,...,19}
{\draw (0,\y) node[left]{$\y$} to (20,\y);}
\draw[thick,->] (0,0) to (20,0) node[right]{$t$};
\draw[thick,->] (0,0) to (0,20) node[above]{$b$};
\draw[very thick, myorange] (3,2) to (20,2);
\draw[very thick, myorange] (5,9) to (11,9);
\draw[very thick, myorange] (3,2) to (3,20);
\draw[very thick, myorange] (6,15) to (9,15);
\draw[very thick, myorange] (5,9) to (5,20);
\draw[very thick, myorange] (13,4) to (20,4);
\draw[very thick, myorange] (6,15) to (6,20);
\draw[very thick, myorange] (11,7) to (13,7);
\draw[very thick, myorange] (9,13) to (14,13);
\draw[very thick, myorange] (12,18) to (17,18);
\draw[very thick, myorange] (9,13) to (9,15);
\draw[very thick, myorange] (14,11) to (15,11);
\draw[very thick, myorange] (16,19) to (18,19);
\draw[very thick, myorange] (11,7) to (11,9);
\draw[very thick, myorange] (12,18) to (12,20);
\draw[very thick, myorange] (17,14) to (20,14);
\draw[very thick, myorange] (13,4) to (13,7);
\draw[very thick, myorange] (14,11) to (14,13);
\draw[very thick, myorange] (15,10) to (20,10);
\draw[very thick, myorange] (15,10) to (15,11);
\draw[very thick, myorange] (16,19) to (16,20);
\draw[very thick, myorange] (18,17) to (19,17);
\draw[very thick, myorange] (17,14) to (17,18);
\draw[very thick, myorange] (18,17) to (18,19);
\draw[very thick, myorange] (19,16) to (20,16);
\draw[very thick, myorange] (19,16) to (19,17);
\node at (9,15) {$\bullet$}; \node at (11,9) {$\bullet$}; \node at (13,7) {$\bullet$};
\end{tikzpicture}
\implies
\begin{tikzpicture}[anchorbase, scale=.3,tinynodes]
\path[fill=gray,opacity=.25] (0,20) rectangle (9,15);
\path[fill=gray,opacity=.25] (9,15) rectangle (11,9);
\path[fill=gray,opacity=.25] (11,9) rectangle (13,7);
\path[fill=gray,opacity=.25] (13,7) rectangle (20,0);
\foreach \x in {1,...,19}
{\draw (\x,0) node[below]{$\x$} to (\x,20);}
\foreach \y in {1,...,19}
{\draw (0,\y) node[left]{$\y$} to (20,\y);}
\draw[thick,->] (0,0) to (20,0) node[right]{$t$};
\draw[thick,->] (0,0) to (0,20) node[above]{$b$};
\draw[very thick, myorange] (3,2) to (20,2);
\draw[very thick, myorange] (5,9) to (11,9);
\draw[very thick, myorange] (3,2) to (3,20);
\draw[very thick, myorange] (6,15) to (9,15);
\draw[very thick, myorange] (5,9) to (5,20);
\draw[very thick, myorange] (13,4) to (20,4);
\draw[very thick, myorange] (6,15) to (6,20);
\draw[very thick, myorange] (11,7) to (13,7);
\draw[very thick, myorange] (9,13) to (14,13);
\draw[very thick, myorange] (12,18) to (17,18);
\draw[very thick, myorange] (9,13) to (9,15);
\draw[very thick, myorange] (14,11) to (15,11);
\draw[very thick, myorange] (16,19) to (18,19);
\draw[very thick, myorange] (11,7) to (11,9);
\draw[very thick, myorange] (12,18) to (12,20);
\draw[very thick, myorange] (17,14) to (20,14);
\draw[very thick, myorange] (13,4) to (13,7);
\draw[very thick, myorange] (14,11) to (14,13);
\draw[very thick, myorange] (15,10) to (20,10);
\draw[very thick, myorange] (15,10) to (15,11);
\draw[very thick, myorange] (16,19) to (16,20);
\draw[very thick, myorange] (18,17) to (19,17);
\draw[very thick, myorange] (17,14) to (17,18);
\draw[very thick, myorange] (18,17) to (18,19);
\draw[very thick, myorange] (19,16) to (20,16);
\draw[very thick, myorange] (19,16) to (19,17);
\node at (6,15) {$\bullet$}; \node at (9,13) {$\bullet$}; \node at (11,7) {$\bullet$}; \node at (13,4) {$\bullet$};
\end{tikzpicture}
\]
Continuing with this example, 
the next step in the proof of Proposition \ref{prop:corners} identifies the
outer {\color{myorange} \textbf{orange}} corners with inner {\color{red} \textbf{red}} corners.
Lemma \ref{lem:algorithm} then passes from
these inner {\color{red} \textbf{red}} corners to 
the indicated sequence of outer {\color{red} \textbf{red}} corners
(here we display the {\color{red} \textbf{red}} and {\color{myorange} \textbf{orange}} 
shadow diagrams):
\[
\begin{tikzpicture}[anchorbase, scale=.3,tinynodes]
\path[fill=gray,opacity=.25] (0,20) rectangle (6,15);
\path[fill=gray,opacity=.25] (6,15) rectangle (9,13);
\path[fill=gray,opacity=.25] (9,13) rectangle (11,7);
\path[fill=gray,opacity=.25] (11,7) rectangle (13,4);
\path[fill=gray,opacity=.25] (13,4) rectangle (20,0);
\foreach \x in {1,...,19}
{\draw (\x,0) node[below]{$\x$} to (\x,20);}
\foreach \y in {1,...,19}
{\draw (0,\y) node[left]{$\y$} to (20,\y);}
\draw[thick,->] (0,0) to (20,0) node[right]{$t$};
\draw[thick,->] (0,0) to (0,20) node[above]{$b$};
\draw[very thick, red] (1,2) to (1,20);
\draw[very thick, red] (1,2) to (3,2); \draw[very thick, myorange] (3,2) to (20,2);
\draw[very thick, red] (2,9) to (2,20);
\draw[very thick, red] (2,9) to (5,9); \draw[very thick, myorange] (5,9) to (11,9);
\draw[very thick, red] (3,1) to (3,2); \draw[very thick, myorange] (3,2) to (3,20);
\draw[very thick, red] (3,1) to (20,1);
\draw[very thick, red] (4,15) to (4,20);
\draw[very thick, red] (4,15) to (6,15); \draw[very thick, myorange] (6,15) to (9,15);
\draw[very thick, red] (5,4) to (5,9); \draw[very thick, myorange] (5,9) to (5,20);
\draw[very thick, red] (5,4) to (13,4); \draw[very thick, myorange] (13,4) to (20,4);
\draw[very thick, red] (6,7) to (6,15); \draw[very thick, myorange] (6,15) to (6,20);
\draw[very thick, red] (6,7) to (11,7); \draw[very thick, myorange] (11,7) to (13,7);
\draw[very thick, red] (7,13) to (7,20);
\draw[very thick, red] (7,13) to (9,13); \draw[very thick, myorange] (9,13) to (14,13);
\draw[very thick, red] (8,18) to (8,20);
\draw[very thick, red] (8,18) to (12,18); \draw[very thick, myorange] (12,18) to (17,18);
\draw[very thick, red] (9,11) to (9,13); \draw[very thick, myorange] (9,13) to (9,15);
\draw[very thick, red] (9,11) to (14,11); \draw[very thick, myorange] (14,11) to (15,11);
\draw[very thick, red] (10,19) to (10,20);
\draw[very thick, red] (10,19) to (16,19); \draw[very thick, myorange] (16,19) to (18,19);
\draw[very thick, red] (11,5) to (11,7); \draw[very thick, myorange] (11,7) to (11,9);
\draw[very thick, red] (11,5) to (20,5);
\draw[very thick, red] (12,14) to (12,18); \draw[very thick, myorange] (12,18) to (12,20);
\draw[very thick, red] (12,14) to (17,14); \draw[very thick, myorange] (17,14) to (20,14);
\draw[very thick, red] (13,3) to (13,4); \draw[very thick, myorange] (13,4) to (13,7);
\draw[very thick, red] (13,3) to (20,3);
\draw[very thick, red] (14,10) to (14,11); \draw[very thick, myorange] (14,11) to (14,13);
\draw[very thick, red] (14,10) to (15,10); \draw[very thick, myorange] (15,10) to (20,10);
\draw[very thick, red] (15,6) to (15,10); \draw[very thick, myorange] (15,10) to (15,11);
\draw[very thick, red] (15,6) to (20,6);
\draw[very thick, red] (16,17) to (16,19); \draw[very thick, myorange] (16,19) to (16,20);
\draw[very thick, red] (16,17) to (18,17); \draw[very thick, myorange] (18,17) to (19,17);
\draw[very thick, red] (17,8) to (17,14); \draw[very thick, myorange] (17,14) to (17,18);
\draw[very thick, red] (17,8) to (20,8);
\draw[very thick, red] (18,16) to (18,17); \draw[very thick, myorange] (18,17) to (18,19);
\draw[very thick, red] (18,16) to (19,16); \draw[very thick, myorange] (19,16) to (20,16);
\draw[very thick, red] (19,12) to (19,16); \draw[very thick, myorange] (19,16) to (19,17);
\draw[very thick, red] (19,12) to (20,12);
\node at (6,15) {$\bullet$}; \node at (9,13) {$\bullet$}; \node at (11,7) {$\bullet$}; \node at (13,4) {$\bullet$};
\end{tikzpicture}
\implies
\begin{tikzpicture}[anchorbase, scale=.3,tinynodes]
\path[fill=gray,opacity=.25] (0,20) rectangle (6,15);
\path[fill=gray,opacity=.25] (6,15) rectangle (9,13);
\path[fill=gray,opacity=.25] (9,13) rectangle (11,7);
\path[fill=gray,opacity=.25] (11,7) rectangle (13,4);
\path[fill=gray,opacity=.25] (13,4) rectangle (20,0);
\foreach \x in {1,...,19}
{\draw (\x,0) node[below]{$\x$} to (\x,20);}
\foreach \y in {1,...,19}
{\draw (0,\y) node[left]{$\y$} to (20,\y);}
\draw[thick,->] (0,0) to (20,0) node[right]{$t$};
\draw[thick,->] (0,0) to (0,20) node[above]{$b$};
\draw[very thick, red] (1,2) to (1,20);
\draw[very thick, red] (1,2) to (3,2); \draw[very thick, myorange] (3,2) to (20,2);
\draw[very thick, red] (2,9) to (2,20);
\draw[very thick, red] (2,9) to (5,9); \draw[very thick, myorange] (5,9) to (11,9);
\draw[very thick, red] (3,1) to (3,2); \draw[very thick, myorange] (3,2) to (3,20);
\draw[very thick, red] (3,1) to (20,1);
\draw[very thick, red] (4,15) to (4,20);
\draw[very thick, red] (4,15) to (6,15); \draw[very thick, myorange] (6,15) to (9,15);
\draw[very thick, red] (5,4) to (5,9); \draw[very thick, myorange] (5,9) to (5,20);
\draw[very thick, red] (5,4) to (13,4); \draw[very thick, myorange] (13,4) to (20,4);
\draw[very thick, red] (6,7) to (6,15); \draw[very thick, myorange] (6,15) to (6,20);
\draw[very thick, red] (6,7) to (11,7); \draw[very thick, myorange] (11,7) to (13,7);
\draw[very thick, red] (7,13) to (7,20);
\draw[very thick, red] (7,13) to (9,13); \draw[very thick, myorange] (9,13) to (14,13);
\draw[very thick, red] (8,18) to (8,20);
\draw[very thick, red] (8,18) to (12,18); \draw[very thick, myorange] (12,18) to (17,18);
\draw[very thick, red] (9,11) to (9,13); \draw[very thick, myorange] (9,13) to (9,15);
\draw[very thick, red] (9,11) to (14,11); \draw[very thick, myorange] (14,11) to (15,11);
\draw[very thick, red] (10,19) to (10,20);
\draw[very thick, red] (10,19) to (16,19); \draw[very thick, myorange] (16,19) to (18,19);
\draw[very thick, red] (11,5) to (11,7); \draw[very thick, myorange] (11,7) to (11,9);
\draw[very thick, red] (11,5) to (20,5);
\draw[very thick, red] (12,14) to (12,18); \draw[very thick, myorange] (12,18) to (12,20);
\draw[very thick, red] (12,14) to (17,14); \draw[very thick, myorange] (17,14) to (20,14);
\draw[very thick, red] (13,3) to (13,4); \draw[very thick, myorange] (13,4) to (13,7);
\draw[very thick, red] (13,3) to (20,3);
\draw[very thick, red] (14,10) to (14,11); \draw[very thick, myorange] (14,11) to (14,13);
\draw[very thick, red] (14,10) to (15,10); \draw[very thick, myorange] (15,10) to (20,10);
\draw[very thick, red] (15,6) to (15,10); \draw[very thick, myorange] (15,10) to (15,11);
\draw[very thick, red] (15,6) to (20,6);
\draw[very thick, red] (16,17) to (16,19); \draw[very thick, myorange] (16,19) to (16,20);
\draw[very thick, red] (16,17) to (18,17); \draw[very thick, myorange] (18,17) to (19,17);
\draw[very thick, red] (17,8) to (17,14); \draw[very thick, myorange] (17,14) to (17,18);
\draw[very thick, red] (17,8) to (20,8);
\draw[very thick, red] (18,16) to (18,17); \draw[very thick, myorange] (18,17) to (18,19);
\draw[very thick, red] (18,16) to (19,16); \draw[very thick, myorange] (19,16) to (20,16);
\draw[very thick, red] (19,12) to (19,16); \draw[very thick, myorange] (19,16) to (19,17);
\draw[very thick, red] (19,12) to (20,12);
\node at (4,15) {$\bullet$}; \node at (7,13) {$\bullet$}; \node at (9,11) {$\bullet$}; 
	\node at (11,5) {$\bullet$}; \node at (13,3) {$\bullet$};
\end{tikzpicture}
\]
\end{example}

\smallskip

By pairing Theorem \ref{thm:V} with Proposition \ref{prop:corners}, 
we deduce the decreasing sequence case of Theorem \ref{thm:longest}.

\begin{cor}\label{cor:decreasing}
Let $w = a_1 \cdots a_k$ be a permutation in $\SG_k$ written in one-line notation. 
The number of rows in $\RS(w)$ equals the length of the longest 
decreasing subsequence in the list $a_1 , \ldots , a_k$.
\end{cor}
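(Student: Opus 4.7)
The plan is to chain together Theorem \ref{thm:V} and Proposition \ref{prop:corners} via the observation that the number of rows in $\RS(w)$ counts the distinct colors that appear in the Viennot diagram.

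First, I would unpack what ``number of rows in $\RS(w)$'' means. By convention, this is the number of nonempty rows of $\lambda$, which in turn equals the number of nonempty rows of the insertion tableau $P$. Theorem \ref{thm:V} tells us that the $j^{\mathrm{th}}$ row of $P$ is filled with the $b$-coordinates of the $c_j$-colored horizontal segments meeting the line $t = k+1$. Hence the number of nonempty rows of $P$ equals the number of indices $j$ such that at least one $c_j$-colored horizontal segment crosses the right boundary $t = k+1$.

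Next I would argue that this count of colors on the right boundary is the same as the count of colors appearing anywhere in the Viennot diagram. By the construction in Definition \ref{def:Viennot}, every horizontal segment drawn at a given time step $t$ is extended at each subsequent time step: the rule for step $t+1$ produces a horizontal segment from $(t,a_i)$ to $(t+1,a_i)$ for every $1\le i \le t$, possibly changing the color. In particular, once a color $c_j$ appears on any horizontal segment, some horizontal segment of color at least $c_j$ continues all the way to $t=k+1$. Moreover, among all horizontal segments in the diagram of color $c_j$, the rightmost one in each ``$c_j$-colored lattice path'' (in the language of Remark \ref{rem:HowToView}) reaches the right boundary. Thus the number of distinct colors $c_j$ appearing on the line $t=k+1$ equals the total number of colors appearing in the entire Viennot diagram.

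Finally, I would invoke Proposition \ref{prop:corners}: the largest $\ell$ such that $c_\ell$ appears in the Viennot diagram is exactly the length of the longest decreasing subsequence of $a_1 \cdots a_k$. Since the colors form a totally ordered set $c_1 < c_2 < \cdots$ and $c_1$ always appears (assuming $k \ge 1$), the maximal such $\ell$ coincides with the total number of distinct colors appearing. Combining the three steps yields the desired equality between the number of rows in $\RS(w)$ and the length of the longest decreasing subsequence.

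There isn't really a hard step here; the only thing requiring care is the middle bookkeeping claim that every color that appears in the diagram also appears on the right boundary, which follows immediately from the fact that the horizontal rays in Viennot's construction travel to $t = k+1$ without terminating and only change color to larger colors as they travel.
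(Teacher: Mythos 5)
Your proof is correct and follows essentially the same route as the paper: combine Theorem \ref{thm:V} with Proposition \ref{prop:corners}, using the fact that the number of rows of $P$ equals the largest color appearing in the Viennot diagram. The extra bookkeeping you supply (that every color appearing anywhere also appears on the right boundary, via the lattice-path description of Remark \ref{rem:HowToView}) is a detail the paper elides but is correctly justified.
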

\begin{proof}
By Theorem \ref{thm:V}, the number of rows in the Young diagram 
associated to $w$ equals the largest color $c_\ell$ appearing in $w$'s Viennot diagram. 
By Proposition \ref{prop:corners}, $\ell$ is the length of the longest 
decreasing subsequence in $a_1 , \ldots , a_k$.
\end{proof}

For completeness, 
we also recall Viennot's proof of the increasing subsequence case of Theorem \ref{thm:longest}.

\begin{prop}\label{prop:increasing}
The number of columns in $\RS(w)$ equals the length of the longest increasing subsequence.
\end{prop}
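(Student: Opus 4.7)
The plan is to mirror Corollary \ref{cor:decreasing}, reducing the proposition to a statement about Viennot's diagram. By Theorem \ref{thm:V}, the number of columns of $\RS(w)$ equals the length of the first row of $P$, which is the number of $c_1$-colored (red) lattice paths in the Viennot diagram of $w$; denote this number by $c$. It therefore suffices to show that $c$ equals the length of the longest increasing subsequence of $a_1, \ldots, a_k$.

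The upper bound follows immediately from the structural observations in the proof of Proposition \ref{prop:corners}: every point $(t, a_t)$ is an outer corner of a unique red lattice path, and the outer corners along a single red path form a decreasing subsequence of $w$ (as $t$ strictly increases and $b$ strictly decreases between consecutive outer corners). Hence the $c$ red paths partition $\{1, \ldots, k\}$ into $c$ disjoint decreasing subsequences of $w$, and since any increasing subsequence can share at most one index with each decreasing subsequence, its length is at most $c$.

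For the lower bound, I plan to produce an increasing subsequence of length $c$ by exploiting the fact that Viennot's construction implements patience sorting. When $(t, a_t)$ is added, the vertical ray it emits changes color from $c_1$ to $c_2$ upon meeting the first red horizontal above it (at some level $b > a_t$), while a new red horizontal at level $a_t$ is born; thus the rightmost red horizontal of each red path behaves exactly like the ``top of a pile,'' with $(t, a_t)$ either birthing a new pile (when no red horizontal lies above $a_t$) or ``bumping'' the top of the unique pile whose top is the smallest level exceeding $a_t$. In particular, at each time the pile tops $b_1(t) < b_2(t) < \cdots < b_{c(t)}(t)$ remain strictly increasing, and pile labels are preserved over each pile's lifetime. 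Labeling the red paths $R_1, \ldots, R_c$ so that $R_i$'s pile-top at time $t$ is $b_i(t)$, I would construct the desired increasing subsequence via a back-pointer recursion: pick any outer corner $(t_c, a_{t_c})$ of $R_c$, and for $i = c-1, c-2, \ldots, 1$, let $t_i$ be the time at which the current top of $R_i$ (just before time $t_{i+1}$) was placed. The invariant $b_i(t) < b_{i+1}(t)$, combined with the distinctness of the $a_t$'s, forces $a_{t_i} < a_{t_{i+1}}$, while $t_i < t_{i+1}$ holds by construction, yielding the desired sequence $a_{t_1} < a_{t_2} < \cdots < a_{t_c}$ of length $c$.

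The principal technical hurdle is verifying the patience-sorting interpretation of Viennot's construction: one must unpack the coloring rules of Definition \ref{def:Viennot} to confirm the ``bump'' behavior (the old pile-top's horizontal changes from $c_1$ to $c_2$, while the new horizontal at level $a_t$ is $c_1$) and check that the pile-top order $b_1(t) < b_2(t) < \cdots$ is preserved across insertions. Once this interpretation is established, the back-pointer recursion delivers the required increasing subsequence essentially for free.
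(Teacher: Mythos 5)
Your proposal is correct and is essentially the paper's own argument: both reduce the statement to counting $c_1$-colored lattice paths via Theorem \ref{thm:V}, both obtain the upper bound from the fact that these paths travel only downward and rightward (so no two points on a single path can belong to a common increasing subsequence), and both obtain the lower bound by starting at an outer corner of the last-created $c_1$-colored path and repeatedly passing to an outer corner of the preceding one. Your back-pointer recursion through pile tops is precisely the paper's geometric walk ``travel down to the next $c_1$-colored path, then left to an outer corner,'' so the patience-sorting interpretation you flag as the main technical hurdle can be bypassed by phrasing that step directly in terms of the lattice paths.
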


\begin{proof}
By Theorem \ref{thm:V},
the number of columns is equal to the number of $c_1$-colored shadow lines
in the Viennot diagram for $w$.

We may produce an increasing subsequence with one 
outer corner on each $c_1$-colored shadow line as follows. 
Start with any outer corner on the ``innermost'' $c_1$-colored shadow line 
(i.e. the $c_1$-colored shadow line that intersects the line $b=k+1$ at the largest $t$ value).
Travel down from this point until intersecting the next $c_1$-colored shadow line, 
then to the left until an outer corner is reached. 
Continuing in this way, we identify an outer corner on each $c_1$ colored shadow line,
and this sequence of outer corners is increasing by construction.

Conversely, the entries of any increasing subsequence determine outer corners that 
must lie on distinct $c_1$-colored shadow lines 
(since the shadow lines in a Viennot diagram always travel downward and rightward), 
so the length of any increasing subsequence is bounded above by the number of 
$c_1$-colored shadow lines.
\end{proof}

As presented in Definition \ref{def:Viennot1}, Viennot's construction
has a manifest symmetry, given by reflection across the diagonal $t=b$,
which immediately implies a celebrated and non-obvious property of the Robinson--Schensted correspondence: 
that interchanging the insertion and recording tableaux 
corresponds to taking the inverse of the corresponding permutation. 
However, for our generalization in \S \ref{sec:updown} it will be useful to break this symmetry, 
and view Viennot's construction as a certain timeline that encodes Schensted's bumping algorithm 
via the following alternative description.

\begin{defn}[Viennot's geometric construction, alternate formulation]\label{def:Viennot}
Let $w \in \SG_k$ be a permutation written in one-line notation as $w = a_1 \cdots a_k$, 
and let $\CS_k=\{c_1 < \cdots < c_k\}$ be a totally ordered set of $k$ colors (also let $c_0$ denote no color). 
The \emph{Viennot diagram} of $w$ is the collection of points 
and colored segments in the first quadrant 
$\{(t,b) \mid t,b > 0 \} \subset \R^2$ defined as follows. 
For each ``time coordinate'' $t=1,2,\ldots,k$, repeat the following steps:
\begin{itemize}
\item Draw the point $(t,a_t)$.
\item Draw the vertical segment from $(t,a_t)$ to $(t,k+1)$ and color it as follows: 
it begins with color $c_1$ and changes color from $c_i$ to $c_{i+1}$ whenever it passes a point 
$(t,b)$ where the horizontal segment $(t-1,b) \to (t,b)$ is colored $c_i$.
\item Draw horizontal segments $(t,a_i) \to (t+1,a_i)$ for all $1 \leq i \leq t$ colored as follows:
if the horizontal segment $(t-1,a_i) \to (t,a_i)$ and the vertical segment $(t,a_i-1) \to (t,a_i)$ have the same color $c_j$, then color 
this segment $c_{j+1}$. Otherwise, color it the same as the horizontal segment $(t-1,a_i) \to (t,a_i)$.
\end{itemize}
The Viennot diagram is the resulting diagram after step $k$.
\end{defn}

It is straightforward (and well-known \cite{ViennotWiki}) 
that this description is equivalent to Definition \ref{def:Viennot1}.

\begin{example}\label{ex:timeline}
We illustrate the steps in Definition \ref{def:Viennot} for the 
permutation $w=2 \ 4 \ 3 \ 1$ in $\SG_4$ from the (running) Example \ref{ex:SaganV}. 
Our colors are 
{\color{red}\textbf{red}} $<$ {\color{myorange}\textbf{orange}} $<$ {\color{green} \textbf{green}} 
($<$ {\color{blue}\textbf{blue}}).
\[
\begin{tikzpicture}[anchorbase, scale=.5,smallnodes]
\foreach \x in {1,...,4}
{\draw (\x,0) node[below]{$\x$} to (\x,5);}
\foreach \y in {1,...,4}
{\draw (0,\y) node[left]{$\y$} to (5,\y);}
\draw[thick,->] (0,0) to (5,0) node[right]{$t$};
\draw[thick,->] (0,0) to (0,5) node[above]{$b$};
\draw[very thick, red] (1,2) to (1,5);
\draw[very thick, red] (1,2) to (2,2);
\node at (1,2){$\bullet$};
\end{tikzpicture}
\, , \quad
\begin{tikzpicture}[anchorbase, scale=.5,smallnodes]
\foreach \x in {1,...,4}
{\draw (\x,0) node[below]{$\x$} to (\x,5);}
\foreach \y in {1,...,4}
{\draw (0,\y) node[left]{$\y$} to (5,\y);}
\draw[thick,->] (0,0) to (5,0) node[right]{$t$};
\draw[thick,->] (0,0) to (0,5) node[above]{$b$};
\draw[very thick, red] (1,2) to (1,5);
\draw[very thick, red] (1,2) to (3,2);
\node at (1,2){$\bullet$};
\draw[very thick, red] (2,4) to (2,5);
\draw[very thick, red] (2,4) to (3,4);
\node at (2,4){$\bullet$};
\end{tikzpicture}
\, , \quad
\begin{tikzpicture}[anchorbase, scale=.5,smallnodes]
\foreach \x in {1,...,4}
{\draw (\x,0) node[below]{$\x$} to (\x,5);}
\foreach \y in {1,...,4}
{\draw (0,\y) node[left]{$\y$} to (5,\y);}
\draw[thick,->] (0,0) to (5,0) node[right]{$t$};
\draw[thick,->] (0,0) to (0,5) node[above]{$b$};
\draw[very thick, red] (1,2) to (1,5);
\draw[very thick, red] (1,2) to (4,2);
\node at (1,2){$\bullet$};
\draw[very thick, red] (2,4) to (2,5);
\draw[very thick, red] (2,4) to (3,4); \draw[very thick, myorange] (3,4) to (4,4); 
\node at (2,4){$\bullet$};
\draw[very thick, red] (3,3) to (3,4); \draw[very thick, myorange] (3,4) to (3,5); 
\draw[very thick, red] (3,3) to (4,3);
\node at (3,3){$\bullet$};
\end{tikzpicture}
\, , \quad
\begin{tikzpicture}[anchorbase, scale=.5,smallnodes]
\foreach \x in {1,...,4}
{\draw (\x,0) node[below]{$\x$} to (\x,5);}
\foreach \y in {1,...,4}
{\draw (0,\y) node[left]{$\y$} to (5,\y);}
\draw[thick,->] (0,0) to (5,0) node[right]{$t$};
\draw[thick,->] (0,0) to (0,5) node[above]{$b$};
\draw[very thick, red] (1,2) to (1,5);
\draw[very thick, red] (1,2) to (4,2); \draw[very thick, myorange] (4,2) to (5,2); 
\node at (1,2){$\bullet$};
\draw[very thick, red] (2,4) to (2,5);
\draw[very thick, red] (2,4) to (3,4); \draw[very thick, myorange] (3,4) to (4,4); 
	\draw[very thick, green] (4,4) to (5,4);  
\node at (2,4){$\bullet$};
\draw[very thick, red] (3,3) to (3,4); \draw[very thick, myorange] (3,4) to (3,5); 
\draw[very thick, red] (3,3) to (5,3);
\node at (3,3){$\bullet$};
\draw[very thick, red] (4,1) to (4,2); \draw[very thick, myorange] (4,2) to (4,4); 
	\draw[very thick, green] (4,4) to (4,5);  
\draw[very thick, red] (4,1) to (5,1);
\node at (4,1){$\bullet$};
\end{tikzpicture}
\]
\end{example}

\begin{rem}\label{rem:timeline}
The proof of Theorem \ref{thm:V} is essentially the 
alternative description of the Viennot diagram in Definition \ref{def:Viennot}, 
together with the observation that the alternative description encodes Schensted's bumping algorithm.
Indeed,
at time $t$ the value $a_t$ enters the insertion tableaux. 
The (horizontal) colored segment $(t,a_t) \to (k+1,a_t)$ tracks the rows to which this value gets bumped
as new values get added. The (vertical) colored segment $(t,a_t) \to (t,k+1)$ encodes the bumping that results
when the value $a_t$ is inserted.

If we stop the algorithm in Definition \ref{def:Viennot} after $s$ steps, 
we obtain a diagram inside a $[1,s] \times [1,k+1]$ box. 
Reading tableaux off the right and top walls of the box as in Theorem \ref{thm:V}, 
we recover the triple $(P_s, Q_s, \lambda_s)$. 
In other words, $P_s$ is determined by the colors meeting the vertical line at $t = s + \frac{1}{2}$, 
while $Q_s$ is determined by the colors meeting the horizontal line at $b = k+1$, as before.
\end{rem}

\begin{example}
Continuing Example \ref{ex:timeline}, we have the following $P_s$ and $Q_s$ for each step $s=1, 2, 3, 4$.
\[
{\footnotesize
\begin{ytableau}
2
\end{ytableau}
}
\, , \ 
{\footnotesize
\begin{ytableau}
1
\end{ytableau}
}
\qquad \qquad
{\footnotesize
\begin{ytableau}
2 & 4
\end{ytableau}
}
\, , \ 
{\footnotesize
\begin{ytableau}
1 & 2
\end{ytableau}
}
\qquad \qquad
{\footnotesize
\begin{ytableau}
2 & 3 \cr
4
\end{ytableau}
}
\, , \ 
{\footnotesize
\begin{ytableau}
1 & 2  \cr
3
\end{ytableau}
}
\qquad \qquad 
{\footnotesize
\begin{ytableau}
1 & 3  \cr
2  \cr
4
\end{ytableau}
}
\, , \ 
{\footnotesize
\begin{ytableau}
1 & 2  \cr
3  \cr
4
\end{ytableau}
}
\]
\end{example}

%
\section{Oscillating Viennot diagrams and patterns}\label{sec:updown}
%

We now extend Viennot's geometric construction to oscillating tableaux. 

\begin{defn}\label{def:UDV}
Let $\mathcal{M}$ be a matching of $2k$ points and let 
$w(\mathcal{M}) = a_1 \cdots a_{2k}$ be the corresponding matching word. 
The \emph{oscillating Viennot diagram} of $\mathcal{M}$ is the collection
of marked points and colored segments in the first quadrant 
$\{(t,b) \mid t,b > 0 \} \subset \R^2$ defined as follows. 
For each ``time coordinate'' $t=1,2,\ldots,2k$, do the following:
\begin{enumerate}
\item If $a_t$ is un-barred, draw the marking $\bullet$ at the point $(t,a_t)$ 
and draw horizontal and vertical segments following the procedure from Definition \ref{def:Viennot}.

\item If $a_t$ is barred, draw the marking $\times$ at the point $(t,a_t)$ and draw horizontal segments 
as in Definition \ref{def:Viennot} at all $b$-values except $b=a_t$. The horizontal segment at height 
$a_t$ ends at the point $(t,a_t)$, and no vertical segment is drawn starting from this point.
\end{enumerate}
The oscillating Viennot diagram, denoted $V(\mathcal{M})$, 
is the resulting diagram after step $2k$.
\end{defn}

\begin{example}\label{ex:UDtimeline}
We illustrate the steps in the above algorithm for the matching word
$4 \ 2 \ 3 \ \bar{4} \ \bar{3} \ 1 \ \bar{2} \ \bar{1}$.
Our colors are 
{\color{red}\textbf{red}} $<$ {\color{myorange}\textbf{orange}} $<$ {\color{green} \textbf{green}} 
($<$ {\color{blue}\textbf{blue}}).
\[
\begin{aligned}
\begin{tikzpicture}[anchorbase, scale=.45,smallnodes]
\foreach \x in {1,...,8}
{\draw (\x,0) node[below]{$\x$} to (\x,5);}
\foreach \y in {1,...,4}
{\draw (0,\y) node[left]{$\y$} to (9,\y);}
\draw[thick,->] (0,0) to (9,0) node[right]{$t$};
\draw[thick,->] (0,0) to (0,5) node[above]{$b$};
\draw[very thick, red] (1,4) to (1,5);
\draw[very thick, red] (1,4) to (2,4);
\node at (1,4){$\bullet$};
\end{tikzpicture}
\, &, \quad
\begin{tikzpicture}[anchorbase, scale=.45,smallnodes]
\foreach \x in {1,...,8}
{\draw (\x,0) node[below]{$\x$} to (\x,5);}
\foreach \y in {1,...,4}
{\draw (0,\y) node[left]{$\y$} to (9,\y);}
\draw[thick,->] (0,0) to (9,0) node[right]{$t$};
\draw[thick,->] (0,0) to (0,5) node[above]{$b$};
\draw[very thick, red] (1,4) to (1,5);
\draw[very thick, red] (1,4) to (2,4); \draw[very thick, myorange] (2,4) to (3,4);
\node at (1,4){$\bullet$};
\draw[very thick, red] (2,2) to (2,4); \draw[very thick, myorange] (2,4) to (2,5);
\draw[very thick, red] (2,2) to (3,2);
\node at (2,2){$\bullet$};
\end{tikzpicture}
\, , \quad
\begin{tikzpicture}[anchorbase, scale=.45,smallnodes]
\foreach \x in {1,...,8}
{\draw (\x,0) node[below]{$\x$} to (\x,5);}
\foreach \y in {1,...,4}
{\draw (0,\y) node[left]{$\y$} to (9,\y);}
\draw[thick,->] (0,0) to (9,0) node[right]{$t$};
\draw[thick,->] (0,0) to (0,5) node[above]{$b$};
\draw[very thick, red] (1,4) to (1,5);
\draw[very thick, red] (1,4) to (2,4); \draw[very thick, myorange] (2,4) to (4,4);
\node at (1,4){$\bullet$};
\draw[very thick, red] (2,2) to (2,4); \draw[very thick, myorange] (2,4) to (2,5);
\draw[very thick, red] (2,2) to (4,2);
\node at (2,2){$\bullet$};
\draw[very thick, red] (3,3) to (3,5);
\draw[very thick, red] (3,3) to (4,3);
\node at (3,3){$\bullet$};
\end{tikzpicture} \\
\begin{tikzpicture}[anchorbase, scale=.45,smallnodes]
\foreach \x in {1,...,8}
{\draw (\x,0) node[below]{$\x$} to (\x,5);}
\foreach \y in {1,...,4}
{\draw (0,\y) node[left]{$\y$} to (9,\y);}
\draw[thick,->] (0,0) to (9,0) node[right]{$t$};
\draw[thick,->] (0,0) to (0,5) node[above]{$b$};
\draw[very thick, red] (1,4) to (1,5);
\draw[very thick, red] (1,4) to (2,4); \draw[very thick, myorange] (2,4) to (4,4);
\node at (1,4){$\bullet$};
\draw[very thick, red] (2,2) to (2,4); \draw[very thick, myorange] (2,4) to (2,5);
\draw[very thick, red] (2,2) to (5,2);
\node at (2,2){$\bullet$};
\draw[very thick, red] (3,3) to (3,5);
\draw[very thick, red] (3,3) to (5,3);
\node at (3,3){$\bullet$};
\node at (4,4){$\times$};
\end{tikzpicture}
\, &, \quad
\begin{tikzpicture}[anchorbase, scale=.45,smallnodes]
\foreach \x in {1,...,8}
{\draw (\x,0) node[below]{$\x$} to (\x,5);}
\foreach \y in {1,...,4}
{\draw (0,\y) node[left]{$\y$} to (9,\y);}
\draw[thick,->] (0,0) to (9,0) node[right]{$t$};
\draw[thick,->] (0,0) to (0,5) node[above]{$b$};
\draw[very thick, red] (1,4) to (1,5);
\draw[very thick, red] (1,4) to (2,4); \draw[very thick, myorange] (2,4) to (4,4);
\node at (1,4){$\bullet$};
\draw[very thick, red] (2,2) to (2,4); \draw[very thick, myorange] (2,4) to (2,5);
\draw[very thick, red] (2,2) to (6,2);
\node at (2,2){$\bullet$};
\draw[very thick, red] (3,3) to (3,5);
\draw[very thick, red] (3,3) to (5,3);
\node at (3,3){$\bullet$};
\node at (4,4){$\times$};
\node at (5,3){$\times$};
\end{tikzpicture}
\, , \quad
\begin{tikzpicture}[anchorbase, scale=.45,smallnodes]
\foreach \x in {1,...,8}
{\draw (\x,0) node[below]{$\x$} to (\x,5);}
\foreach \y in {1,...,4}
{\draw (0,\y) node[left]{$\y$} to (9,\y);}
\draw[thick,->] (0,0) to (9,0) node[right]{$t$};
\draw[thick,->] (0,0) to (0,5) node[above]{$b$};
\draw[very thick, red] (1,4) to (1,5);
\draw[very thick, red] (1,4) to (2,4); \draw[very thick, myorange] (2,4) to (4,4);
\node at (1,4){$\bullet$};
\draw[very thick, red] (2,2) to (2,4); \draw[very thick, myorange] (2,4) to (2,5);
\draw[very thick, red] (2,2) to (6,2); \draw[very thick, myorange] (6,2) to (7,2);
\node at (2,2){$\bullet$};
\draw[very thick, red] (3,3) to (3,5);
\draw[very thick, red] (3,3) to (5,3);
\node at (3,3){$\bullet$};
\node at (4,4){$\times$};
\node at (5,3){$\times$};
\draw[very thick, red] (6,1) to (6,2); \draw[very thick, myorange] (6,2) to (6,5);
\draw[very thick, red] (6,1) to (7,1);
\node at (6,1){$\bullet$};
\end{tikzpicture} \\
\begin{tikzpicture}[anchorbase, scale=.45,smallnodes]
\foreach \x in {1,...,8}
{\draw (\x,0) node[below]{$\x$} to (\x,5);}
\foreach \y in {1,...,4}
{\draw (0,\y) node[left]{$\y$} to (9,\y);}
\draw[thick,->] (0,0) to (9,0) node[right]{$t$};
\draw[thick,->] (0,0) to (0,5) node[above]{$b$};
\draw[very thick, red] (1,4) to (1,5);
\draw[very thick, red] (1,4) to (2,4); \draw[very thick, myorange] (2,4) to (4,4);
\node at (1,4){$\bullet$};
\draw[very thick, red] (2,2) to (2,4); \draw[very thick, myorange] (2,4) to (2,5);
\draw[very thick, red] (2,2) to (6,2); \draw[very thick, myorange] (6,2) to (7,2);
\node at (2,2){$\bullet$};
\draw[very thick, red] (3,3) to (3,5);
\draw[very thick, red] (3,3) to (5,3);
\node at (3,3){$\bullet$};
\node at (4,4){$\times$};
\node at (5,3){$\times$};
\draw[very thick, red] (6,1) to (6,2); \draw[very thick, myorange] (6,2) to (6,5);
\draw[very thick, red] (6,1) to (8,1);
\node at (6,1){$\bullet$};
\node at (7,2){$\times$};
\end{tikzpicture}
\, &, \quad
\begin{tikzpicture}[anchorbase, scale=.45,smallnodes]
\foreach \x in {1,...,8}
{\draw (\x,0) node[below]{$\x$} to (\x,5);}
\foreach \y in {1,...,4}
{\draw (0,\y) node[left]{$\y$} to (9,\y);}
\draw[thick,->] (0,0) to (9,0) node[right]{$t$};
\draw[thick,->] (0,0) to (0,5) node[above]{$b$};
\draw[very thick, red] (1,4) to (1,5);
\draw[very thick, red] (1,4) to (2,4); \draw[very thick, myorange] (2,4) to (4,4);
\node at (1,4){$\bullet$};
\draw[very thick, red] (2,2) to (2,4); \draw[very thick, myorange] (2,4) to (2,5);
\draw[very thick, red] (2,2) to (6,2); \draw[very thick, myorange] (6,2) to (7,2);
\node at (2,2){$\bullet$};
\draw[very thick, red] (3,3) to (3,5);
\draw[very thick, red] (3,3) to (5,3);
\node at (3,3){$\bullet$};
\node at (4,4){$\times$};
\node at (5,3){$\times$};
\draw[very thick, red] (6,1) to (6,2); \draw[very thick, myorange] (6,2) to (6,5);
\draw[very thick, red] (6,1) to (8,1);
\node at (6,1){$\bullet$};
\node at (7,2){$\times$};
\node at (8,1){$\times$};
\end{tikzpicture}
\end{aligned}
\]
\end{example}

We also give the end result of Definition \ref{def:UDV} for the matching from \eqref{eq:matchingex}.

\begin{example}\label{ex:medium}
The following is the oscillating Viennot diagram for the matching word
\[
7 \ 8 \ 6 \ 5 \ \bar{8} \ 3 \ \bar{7} \ 4 \ 1 \ \bar{6} \ 2 \ \bar{5} \ \bar{4} \ \bar{3} \ \bar{2} \ \bar{1} \, .
\]
Our colors are 
{\color{red}\textbf{red}} $<$ {\color{myorange}\textbf{orange}} $<$ {\color{green} \textbf{green}} 
$<$ {\color{blue}\textbf{blue}} ($< \cdots$).
\[
\begin{tikzpicture}[anchorbase, scale=.5,tinynodes]
\foreach \x in {1,...,16}
{\draw (\x,0) node[below]{$\x$} to (\x,9);}
\foreach \y in {1,...,8}
{\draw (0,\y) node[left]{$\y$} to (17,\y);}
\draw[thick,->] (0,0) to (16,0) node[right]{$t$};
\draw[thick,->] (0,0) to (0,9) node[above]{$b$};
\draw[very thick, red] (1,7) to (1,9);
\draw[very thick, red] (1,7) to (3,7); \draw[very thick, myorange] (3,7) to (4,7); 
	\draw[very thick, green] (4,7) to (6,7); \draw[very thick, blue] (6,7) to (7,7);
\node at (1,7){$\bullet$};
\draw[very thick, red] (2,8) to (2,9);
\draw[very thick, red] (2,8) to (5,8); 
\node at (2,8){$\bullet$};
\draw[very thick, red] (3,6) to (3,7); \draw[very thick, myorange] (3,7) to (3,9);
\draw[very thick, red] (3,6) to (4,6); \draw[very thick, myorange] (4,6) to (6,6);
	\draw[very thick, green] (6,6) to (9,6); \draw[very thick, blue] (9,6) to (10,6);
\node at (3,6){$\bullet$};
\draw[very thick, red] (4,5) to (4,6); \draw[very thick, myorange] (4,6) to (4,7);
	\draw[very thick, green] (4,7) to (4,9);
\draw[very thick, red] (4,5) to (6,5); \draw[very thick, myorange] (6,5) to (9,5);
	\draw[very thick, green] (9,5) to (12,5);
\node at (4,5){$\bullet$};
\node at (5,8){$\times$};
\draw[very thick, red] (6,3) to (6,5); \draw[very thick, myorange] (6,5) to (6,6); 
	\draw[very thick, green] (6,6) to (6,7); \draw[very thick, blue] (6,7) to (6,9);
\draw[very thick, red] (6,3) to (9,3); \draw[very thick, myorange] (9,3) to (14,3);
\node at (6,3){$\bullet$};
\node at (7,7){$\times$};
\draw[very thick, red] (8,4) to (8,9);
\draw[very thick, red] (8,4) to (11,4); \draw[very thick, myorange] (11,4) to (13,4);
\node at (8,4){$\bullet$};
\draw[very thick, red] (9,1) to (9,3); \draw[very thick, myorange] (9,3) to (9,5); 
	\draw[very thick, green] (9,5) to (9,6); \draw[very thick, blue] (9,6) to (9,9);
\draw[very thick, red] (9,1) to (16,1); 
\node at (9,1){$\bullet$};
\node at (10,6){$\times$};
\draw[very thick, red] (11,2) to (11,4); \draw[very thick, myorange] (11,4) to (11,9);
\draw[very thick, red] (11,2) to (15,2);
\node at (11,2){$\bullet$};
\node at (12,5){$\times$}; \node at (13,4){$\times$}; \node at (14,3){$\times$}; 
	\node at (15,2){$\times$}; \node at (16,1){$\times$};
\end{tikzpicture}
\]
\end{example}

\begin{prop}\label{prop:obvi}
The assignment $\mathcal{M} \mapsto V(\mathcal{M})$ is a bijection between 
matchings and oscillating Viennot diagrams.
\end{prop}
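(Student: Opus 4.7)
The proposition is essentially tautological once one observes that the construction in Definition \ref{def:UDV} can be run in reverse. My plan is to directly exhibit the inverse map by reading off the markings.

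First, I would observe that from any up-down Viennot diagram $D = V(\mathcal{M})$, one can recover the sequence of markings: for each time coordinate $t \in \{1, \ldots, 2k\}$, exactly one marking (either $\bullet$ or $\times$) is drawn on the vertical line at $t$, since precisely one of steps (1) or (2) in Definition \ref{def:UDV} is executed at each time step. Denote the position of this marking by $(t, b_t)$ and its type by $\epsilon_t \in \{\bullet, \times\}$.

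Second, I would recover the matching word $w(\mathcal{M}) = a_1 \cdots a_{2k}$ via the rule $a_t = b_t$ if $\epsilon_t = \bullet$ and $a_t = \bar{b_t}$ if $\epsilon_t = \times$. This is immediate from how the markings are placed in Definition \ref{def:UDV}: a $\bullet$ at $(t, a_t)$ is drawn precisely when $a_t$ is un-barred (so $a_t = b_t$), while a $\times$ at $(t, a_t)$ is drawn precisely when $a_t$ is barred (so $a_t = \bar{b_t}$). Since matchings are in bijection with matching words (as noted after \eqref{eq:matchingex}), this recovers $\mathcal{M}$ from $V(\mathcal{M})$, proving injectivity.

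Finally, surjectivity is automatic since the set of up-down Viennot diagrams is defined as the image of $V$. There is no real obstacle to this proof; the colored segments and the bumping dynamics they encode carry no information beyond what is in the markings for the purpose of this bijection, though they will be essential in the remainder of \S\ref{sec:updown} for analyzing patterns in $\mathcal{M}$.
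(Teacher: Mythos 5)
Your proof is correct and matches the paper's argument: the paper likewise notes that an up-down Viennot diagram is determined by its $\bullet$ and $\times$ markings, which in turn determine and are determined by the matching word. Your version just spells out the read-off rule more explicitly.
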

\begin{proof}
An oscillating Viennot diagram is uniquely determined by the $\bullet$ markings
(the locations of the $c_1$-colored outer corners) 
and the $\times$ markings. These determine, and are determined by, the matching word.
\end{proof}

We now arrive at our generalization of Proposition \ref{prop:corners}.

\begin{prop}\label{prop:main}
Let $\mathcal{M}$ be a matching of $2k$ points and let $c_1 < \cdots < c_k$ 
be the totally ordered set of colors in its oscillating Viennot diagram $V(\mathcal{M})$.
The color $c_\ell$ appears in $V(\mathcal{M})$ if and only if the 
matching word $w(\mathcal{M})$ contains a coexistent decreasing subsequence of length $\ell$. 
\end{prop}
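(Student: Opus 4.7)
My plan is to mimic the proof of Proposition~\ref{prop:corners}, adapting the lattice-path argument to accommodate the $\times$ markings and the coexistence condition. First, I observe that the basic structural facts from the proof of Proposition~\ref{prop:corners} all survive in the up-down setting: the $c$-colored segments form non-intersecting lattice paths for each color $c$, every nonempty $c_i$-colored lattice path contains at least one outer corner, each $c_{i+1}$-colored outer corner is adjacent to exactly one $c_i$-colored inner corner and vice versa, and the $c_1$-colored outer corners are exactly the $\bullet$ markings. The only new feature is that a $c$-colored lattice path may now terminate at a $\times$ marking rather than exit through the right of the diagram; crucially, $\times$ markings terminate horizontal segments only at their right end.

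Next, I reformulate the claim relative to a vertical slice. A coexistent decreasing subsequence of length $\ell$ in $w(\mathcal{M})$ corresponds geometrically to a decreasing sequence of $\ell$ $\bullet$ markings $(t_1,b_1),\ldots,(t_\ell,b_\ell)$ such that the horizontal segments at heights $b_1,\ldots,b_\ell$ all cross the vertical slice $t = t_\ell + \tfrac{1}{2}$ --- equivalently, no $\times$ at height $b_j$ occurs in the interval $(t_j,t_\ell]$. The proposition then follows from the slice-relative statement: for each $s$, the color $c_\ell$ crosses the slice $t = s + \tfrac{1}{2}$ if and only if there is a decreasing sequence of $\ell$ $\bullet$ markings at times $\le s$ whose horizontal segments all cross that slice.

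The main technical step will be a slice-relative version of Lemma~\ref{lem:algorithm}: at a fixed slice, a length-$r$ decreasing sequence of $c$-colored inner corners whose $c$-colored paths cross the slice exists if and only if a length-$(r+1)$ decreasing sequence of $c$-colored outer corners with the same slice-crossing property exists. The game-of-chicken argument adapts essentially verbatim: the two vehicles leaving consecutive inner corners $\ic_{j-1}$ and $\ic_j$ travel leftward along a horizontal segment and downward along a vertical segment respectively, and since $\times$ markings terminate horizontals only on their right, neither vehicle can die prematurely --- each reaches an outer corner, which must lie in the appropriate rectangle $B_j$. Iterating this equivalence $\ell-1$ times, starting from the $c_\ell$-colored outer corner attached to the $c_\ell$-segment crossing the slice, produces the required length-$\ell$ decreasing sequence of $c_1$-colored outer corners with the slice-crossing property; the converse direction is obtained by running the iteration backwards. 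The main obstacle will be carefully verifying that the slice-crossing property propagates through each application of the lemma; this should follow from the observation that each outer corner produced lies on the same $c$-colored path as the inner corner it was extracted from, and therefore inherits the slice-crossing property.
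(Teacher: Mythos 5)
Your overall strategy --- iterating the corner-exchange of Lemma \ref{lem:algorithm} inside the up-down diagram --- is the same as the paper's, but the mechanism you propose for controlling the $\times$ markings does not work, and you are missing the one observation that makes the argument go through: in a matching word the barred symbols appear in \emph{decreasing} order, so if $(t,b)$ is any corner (hence the row at height $b$ is alive at time $t$, hence no $\bar{b}'$ with $b'\le b$ has yet occurred), the entire rectangle $[1,t]\times[1,b]$ contains no $\times$ markings and is literally a piece of an ordinary Viennot diagram. Since every corner produced by Lemma \ref{lem:algorithm} lies weakly below and to the left of the corners it is produced from, the whole iteration, in both directions, stays inside such a $\times$-free rectangle and the lemma applies verbatim; coexistence of the resulting subsequence is then automatic. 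Without this, your reverse direction is already in trouble: running Lemma \ref{lem:algorithm} backwards sends vehicles \emph{rightward} and upward, and a rightward-travelling vehicle is precisely the one that can die at a $\times$ (your remark that ``$\times$ markings terminate horizontals only on their right'' protects the leftward/downward vehicles of the forward direction, not these).

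The slice-crossing invariant you propose instead does not propagate. Within one application of the lemma your ``same $c$-colored path'' observation is fine, but between applications you must pass from ``the $c_i$-colored path through this point crosses the slice'' (the point viewed as a $c_i$-outer corner) to ``the $c_{i-1}$-colored path through this point crosses the slice'' (the same point viewed as a $c_{i-1}$-inner corner), and this implication is false. For the matching word $w=2\,6\,4\,1\,5\,\bar{6}\,\bar{5}\,\bar{4}\,3\,\bar{3}\,\bar{2}\,\bar{1}$, the point $(3,6)$ is an orange outer corner whose orange path (descending to height $2$ at time $4$) crosses the slice $t=8+\tfrac12$, while the red path through $(3,6)$ descends to the $\bullet$ at $(3,4)$ and is killed by the $\times$ at $(8,4)$, so it never reaches that slice. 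Note also that your fixed-slice reformulation demands more than coexistence requires: the two $\bullet$s $(2,6),(3,4)$ that Lemma \ref{lem:algorithm} extracts from $(3,6)$ do give the coexistent decreasing subsequence $6,4$, because one only needs the relevant rows to survive until the subsequence's own last time $t_\ell=3$, not until the time $s=8$ at which the top color crosses a slice. The repair is to drop the slice entirely and localize to the $\times$-free rectangle below and to the left of the chosen $c_\ell$-colored outer corner (respectively, of the given coexistent subsequence), which is exactly what the paper does.
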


\begin{proof}
If $(t,b)$ is an (outer/inner) corner in an oscillating Viennot diagram, 
then we claim that the rectangle $[1, t] \times [1,b]$ is identical to a corresponding piece in
an ordinary Viennot diagram, i.e. there are no $\times$-marked points inside this rectangle. 
Indeed, the existence of the corner implies that $\bar{b}$ does not appear until after time $t$, 
thus neither does $\bar{b}'$ for any $b' < b$, since barred indices appear in decreasing order. 
Thus all $\times$-marked points in the time interval $[1,t]$ have vertical coordinate in $(b, \infty)$.

Choosing a $c_\ell$-colored outer corner $(t,b)$, 
we can repeat the algorithm of Lemma \ref{lem:algorithm} to obtain a decreasing subsequence of length $\ell$ 
inside the rectangle $[1,t] \times [1,b]$. 
None of the indices in this decreasing subsequence will appear in barred form before time $t$, 
so this sequence is coexistent.

Conversely, suppose we have a length $\ell$ coexistent decreasing subsequence 
$a_{t_1} > a_{t_2} > \cdots > a_{t_\ell}$. 
Then, $\bar{a}_{t_1}$ does not appear until after time $t_{\ell}$, so neither does $\bar{b}$ for any $b < a_{t_1}$. 
Thus the rectangle $[1,t_{\ell}] \times [1, a_{t_1}]$ is identical to a corresponding piece in
an ordinary Viennot diagram. 
The reverse algorithm at the end of Lemma \ref{lem:algorithm} will stay within this rectangle, 
and produces a $c_\ell$-colored outer corner. 
\end{proof}

\begin{proof}[Proof of Theorem \ref{thm:longestC}]
The assignment $V(\mathcal{M}) \mapsto \mathcal{M} \mapsto \UD(\mathcal{M})$
given by composing the bijection from Proposition \ref{prop:obvi} 
with the Sundaram--Stanley bijection is given as in Remark \ref{rem:timeline}, 
i.e. $\UD(\mathcal{M})_s$ can be read off from ``time slices'' in $V(\mathcal{M})$
of the form $t=s+\frac{1}{2}$.
It follows that there exists $0<s<2k$ such that $\UD(\mathcal{M})_s$ has at least $\ell$ rows 
if and only if the color $c_{\ell}$ appears in $V(\mathcal{M})$.
By Proposition \ref{prop:main}, 
the color $c_{\ell}$ appears in $V(\mathcal{M})$ if and only if the matching word $w(\mathcal{M})$ 
has a coexistent decreasing subsequence of length $\ell$, 
which corresponds to an $\ell$-pattern.
\end{proof}

\bibliographystyle{plain}

%

%
\end{document}